\theoremstyle{plain}
\newtheorem{te}{Theorem}[section]
\newtheorem{lem}[te]{Lemma} 
\newtheorem*{lem*}{Lemma A}
\newtheorem{pr}[te]{Proposition}
\theoremstyle{remark}
\newtheorem*{re}{Remark}
\newtheorem*{ack*}{\textbf{Acknowledgement}}
\def\ni{{\noindent}}
\def\l{{\bf l}}\def\e{{\bf e}}
\def\tt{{\tilde{\tau}}}
\def\t{{\tau}}
\def \G{{\Gamma}}
\def\th{{\theta}}
\def\tp{{\tilde{\psi}}}
\def \tP{{\tilde{\Phi}}}
\def \tPs{{\tilde{\Psi}}}
\def \ta{{\tilde{\alpha}}}
\def \l{{\lambda}}
\def\R{{\mathbb R}}
\def\N{{\mathbf N}}
\def\Nb{{\mathbb N}}
\def\S{{\mathbb S}}
\def \bq{{\bar{q}}}
\def\M{{\mathbb M}}
\def\A{{\mathbf A}}
\def\L{{\mathbf L}}
\def\x{{\mathbf x}}
\def \e{{\mathbf e}}
\def\Dec{{\operatorname{Dec}}}
\def\nint{\mathop{\diagup\kern-13.0pt\int}}
\def \tQ{{\tilde{Q}}}
\def \Rc{{\mathcal R}} 
\def\Cc{{\mathcal C}}\def\Nc{{\mathcal N}}
\def\Tc{{\mathcal T}}
\def\Bc{{\mathcal B}}\def\Rc{{\mathcal R}}
\def\Ac{{\mathcal A}}
\def\Pc{{\mathcal P}}
\def\Sc{{\mathcal S}}\def\Vc{{\mathcal V}} 
\def \Ec{{\mathcal E}}
\def\emph#1{{\it #1}}
\begin{document}

		\author{D\'ominique Kemp}
		\address{Department of Mathematics, Indiana University,  Bloomington IN}
		\email{dekemp@iu.edu}

		\title{Decoupling via Scale-Based Approximations of Limited Efficacy}
		\maketitle

\begin{abstract}We consider the decoupling theory of a broad class of $C^5$ surfaces $\M \subset \R^3$ lacking planar points. In particular, our approach also applies to surfaces which are not graphed by mixed homogeneous polynomials. The study of $\M$ furnishes opportunity to recast iterative linear decoupling in a more general form. Here, Taylor-based analysis is combined with efforts to build a library of canonical surfaces (non-cylindrical in general) by which $\M$ may be approximated for decoupling purposes. The work presented may be generalized to the consideration of other surfaces not addressed.
\end{abstract}
\maketitle

\section{Background}
\label{s1}

Let $\M$ be a compact, $C^5$ surface in $\R^3$ and $\N: \M \rightarrow \S^2$ a unit normal vector field on $\M$. Gaussian curvature is defined for $\M$ as $$K = \det \text{d}\N.$$ Subsets $\th \subset \M$ shall be called caps, and we shall consider their $\delta$-neighborhoods generated by $\N$: $$\Nc_\delta(\th) = \{p + v\N(p): p \in \th, v \in [-\delta, \delta]\}.$$Throughout this paper, we shall be mentioning Fourier projections onto such sets, so let us define $P_S f$ as the Fourier projection of a function $f$ onto $S \subset \R^3$: $$P_S f(x) = \int_S \hat{f}(\xi) e(x \cdot \xi) d\xi,$$ where $e(r)$ represents $e^{2\pi i r}$. Lastly, within the context of this paper, $\th$ shall be referred to as rectangular if its projection onto a plane tangent to $\M$ at some point in $\th$ is a rectangle. 

In \cite{BD3} and \cite{BD4}, Bourgain and Demeter introduced $l^r(L^p)$ decoupling theory for hypersurfaces in $\R^n$. Concerning the case $n=3$, we have from their work the following theorem, which holds more generally for $C^3$ hypersurfaces in $\R^3$.

\begin{te} 
\label{t0.1}
Assume that $K$ vanishes nowhere on $\M$. Then, for each $\delta > 0$, there exists a partition $\Pc_\delta(\M)$ of $\M$ into maximally flat, square caps $\t$ of side length $\delta^{1/2}$ such that the following holds for all $2 \leq p \leq 4$. 

For each fixed $\epsilon > 0$, there exists $C_\epsilon > 0$ (depending only on $\epsilon$, the $C^3$ norm of $\M$, and a lower bound for $\l_1$ and $\l_2$) such that for all $\delta$

\begin{equation} \label{*} \|f\|_{L^p(\R^3)} \leq C_\epsilon \delta^{-\epsilon}|\Pc_\delta(\M)|^{1/2 - 1/p}(\sum_{\t \in \Pc_\delta(\M)} \|P_{\Nc_\delta(\t)} f\|_{L^p(\R^3)}^p)^{1/p} \end{equation} whenever $f$ is Fourier supported in $\Nc_\delta(\M)$. 

If $K > 0$ throughout $\M$, we have also for all $\epsilon > 0$ \begin{equation} \label{**}  \|f\|_{L^p(\R^3)} \leq C_\epsilon \delta^{-\epsilon} (\sum_{\t \in \Pc_\delta(\M)} \|P_{\Nc_\delta(\t)} f\|_{L^p(\R^3)}^2)^{1/2} \end{equation} for all $f$ Fourier supported in $\Nc_\delta(\M)$.

\end{te}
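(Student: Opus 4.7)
The theorem is essentially contained in the work of Bourgain and Demeter \cite{BD3}, \cite{BD4}, so my plan is to reduce it for a general $C^3$ surface to their model results on the elliptic and hyperbolic paraboloids. Using compactness and a smooth partition of unity, it suffices to prove both estimates on a small neighborhood of an arbitrary point $p_0 \in \M$. After a rigid motion I may take $p_0 = 0$ with tangent plane $\{y = 0\}$, so that locally $\M$ is the graph $(x_1, x_2, \phi(x_1, x_2))$ with $\phi(0) = 0$ and $\nabla \phi(0) = 0$. A further affine change in the $(x_1, x_2)$-plane diagonalizes the Hessian and produces $\phi(x) = \frac{1}{2}(\l_1 x_1^2 + \l_2 x_2^2) + O(|x|^3)$, with $|\l_i|$ bounded below by the hypothesis $K(p_0) \neq 0$.

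For the $l^2$ bound \eqref{**} I first observe that $K > 0$ forces $\l_1 \l_2 > 0$, so the local model is an elliptic paraboloid. The key point is that on a cap $\t$ whose projection to the tangent plane is a $\delta^{1/2} \times \delta^{1/2}$ square, the cubic Taylor remainder of $\phi$ has size $O(\delta^{3/2}) \ll \delta$; therefore $\Nc_\delta(\t)$ is comparable, up to dilating by a uniform constant in the normal direction, to the $\delta$-neighborhood of the corresponding $\delta^{1/2}$-cap on the paraboloid $y = \frac{1}{2}(\l_1 x_1^2 + \l_2 x_2^2)$. The $l^2(L^p)$ paraboloid decoupling of \cite{BD3} holds for $p \in [2, 6]$ at scale $\delta^{1/2}$, so it transfers directly to $\M$ in the range $p \in [2, 4]$ and yields \eqref{**}.

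For the general bound \eqref{*} I would split $\M$ into open regions of positive and negative Gaussian curvature. On $\{K > 0\}$ the estimate \eqref{**} implies \eqref{*} via Hölder's inequality, producing exactly the factor $|\Pc_\delta(\M)|^{1/2 - 1/p}$. On $\{K < 0\}$ the local model becomes a hyperbolic paraboloid ($\l_1 \l_2 < 0$), and the Taylor approximation of the previous paragraph applies verbatim. The $l^p$ decoupling for the hyperbolic paraboloid at $p = 4$, which is the main result of \cite{BD4}, together with the trivial $l^2$ orthogonality at $p = 2$ and interpolation, produces \eqref{*} for $p \in [2, 4]$.

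The hard part is really the hyperbolic case rather than the reduction itself: the full $l^2$ decoupling cannot hold for a saddle surface at the natural scale $\delta^{1/2}$, because pairs of caps aligned along the two asymptotic null directions have $\delta$-neighborhoods that are nearly parallel, so the Loomis--Whitney style transversality which fuels the elliptic proof breaks down. The weaker $l^p$ formulation with the Cauchy-Schwarz factor is what survives: one obtains it in \cite{BD4} from a bilinear $p = 4$ decoupling at a slightly smaller scale, combined with induction on scales. All remaining ingredients -- patching local decouplings into a global one, defining $\Pc_\delta(\M)$ as the image of the canonical $\delta^{1/2} \times \delta^{1/2}$ grid under the normalized parametrization, and tracking that the constants depend only on $\|\phi\|_{C^3}$ and a lower bound on $|\l_i|$ -- are routine.
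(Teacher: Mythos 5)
The paper itself offers no proof of Theorem \ref{t0.1}: it is quoted as background from \cite{BD3} and \cite{BD4}. So your proposal can only be measured against the arguments in those references, which your outline correctly identifies as the source of the two model results (elliptic paraboloid for \eqref{**}; the nonzero-curvature $\ell^p$ theorem of \cite{BD4}, H\"older, and interpolation for \eqref{*}).

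There is, however, a genuine gap in your transfer step, and it affects both estimates. You argue that because the cubic Taylor remainder of $\phi$ is $O(\delta^{3/2})$ on a single $\delta^{1/2}\times\delta^{1/2}$ cap, the decoupling for the osculating paraboloid at $p_0$ ``transfers directly'' to $\M$. But the paraboloid decoupling inequality applies to functions Fourier supported in an $O(\delta)$-neighborhood of the paraboloid, whereas $f$ is supported in $\Nc_\delta(\M)$ over a piece of $\M$ of unit diameter, on which $\M$ deviates from that single osculating paraboloid by $O(1)$, not $O(\delta)$. Comparability of the individual \emph{target} caps $\Nc_\delta(\t)$ with paraboloid caps is beside the point; what is needed is that the entire Fourier support lie within $O(\delta)$ of the model, which holds only on pieces of diameter $\lesssim\delta^{1/3}$, and decoupling from scale $1$ down to scale $\delta^{1/3}$ is itself a decoupling statement of the same strength, so the argument as written is circular. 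The standard repair is the iteration of Pramanik--Seeger \cite{PS}, carried out in Section 7 of \cite{BD3} and recalled in Section \ref{s4.3} of this paper as ``type I iterative decoupling'': decouple into caps of an intermediate scale on which the quadratic approximation is accurate to within the current neighborhood width, re-expand the graph around a basepoint of each such cap, and iterate (equivalently, run an induction on scales), checking that only $O(|\log\epsilon|)$ steps are needed so the accumulated constants remain of the form $C_\epsilon\delta^{-\epsilon}$. With that step supplied, the remaining ingredients of your outline (H\"older to pass from $\ell^2$ to $\ell^p$ on $\{K>0\}$, the $p=4$ result of \cite{BD4} on $\{K<0\}$, interpolation with $p=2$, and the patching and bookkeeping of constants) are sound.
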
 In particular, when $\M$ is the paraboloid, the elements of $\Pc_\delta(\M)$ project down onto squares of side length $\delta^{1/2}$.

We call \eqref{*} and \eqref{**} respectively $\ell^p$ and $\ell^2$ decoupling at scale $\delta$ when they hold for partitions $\Pc_\delta(\M)$ comprised of flat caps. A flat cap $\t$ is characterized as a subset of $\M$ that lies within $\delta$ of a plane $\Pi$ tangent to $\M$. In this case, we say that $\t$ is \emph{$\delta$-approximated} by $\Pi$. 

When $\Pi$ $\delta$-approximates $\t$, $\tt = \Nc_\delta(\t)$ is in turn approximated by a rectangle $R$ contained within it. In other words, in addition to $R \subset \tt$, an $O(1)$-enlargement of $R$ contains $\tt$. Consequently, the following proposition shows that we cannot non-trivially partition $\t$ for the purpose of $\ell^2$ decoupling.

\begin{pr}
\label{p0.1}
Let $L$ be a line segment in $\R^n$ of length $\sim 1$. For each $0 \le \delta, N^{-1}<1$, let $\Pc_{\delta,N}$ be a partition of the $\delta$-neighborhood $\Nc_\delta(L)$ of $L$ into $\sim N$ cylinders $T$  with length $N^{-1}$ and radius $\delta$.

For $p>2$, let $D(\delta,N,p)$ be the smallest constant such that
\begin{equation}
\label{8.7}
\|f\|_{L^p(\R^n)}\le D(\delta,N,p)(\sum_{T\in\Pc_{\delta,N}}\|P_T f\|_{L^p(\R^n)}^2)^{1/2}
\end{equation}
holds for all $f$ Fourier supported on $\Nc_\delta(L)$. Then
$$D(\delta,N,p)\sim N^{\frac12-\frac{1}{p}},$$
and (approximate) equality in \eqref{8.7} can be achieved by using a smooth approximation of $1_{\Nc_\delta(L)}$.
\end{pr}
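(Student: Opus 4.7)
The argument splits into matching lower and upper bounds, with the lower bound attained by the smooth approximation of $1_{\Nc_\delta(L)}$ advertised in the statement.

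For the lower bound I would take $f := \check\phi$ where $\phi$ is a smooth bump adapted to $\Nc_\delta(L)$, a tube of dimensions $1 \times \delta \times \cdots \times \delta$ in $\R^n$. By uncertainty, $|f| \sim \delta^{n-1}$ on the dual tube $L^*$ of volume $\sim \delta^{-(n-1)}$, giving $\|f\|_{L^p(\R^n)} \sim \delta^{(n-1)(1-1/p)}$. Applying the same computation to the cylinder $T$ (volume $N^{-1}\delta^{n-1}$, dual box of volume $\sim N\delta^{-(n-1)}$) yields $\|P_T f\|_{L^p(\R^n)} \sim N^{1/p-1}\delta^{(n-1)(1-1/p)}$, and hence $(\sum_T \|P_T f\|_{L^p}^2)^{1/2} \sim N^{1/p-1/2}\delta^{(n-1)(1-1/p)}$. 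The quotient is $\sim N^{1/2-1/p}$, proving both the lower bound and the sharpness claim.

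For the upper bound I would reduce to a one-dimensional flat decoupling by Fubini. Orienting coordinates so that $L$ lies along $e_1$ and writing $\xi = (\xi_1, \xi')$, one has $\Nc_\delta(L) = [0,1]_{\xi_1}\times D_\delta$ and $T_k = I_k \times D_\delta$ with $I_k = [k/N,(k+1)/N]$. For each fixed $x' \in \R^{n-1}$ the slice $F(x_1) := f(x_1,x')$ has 1D Fourier support in $[0,1]$, while $F_k(x_1) := (P_{T_k}f)(x_1,x')$ has support in $I_k$. Establishing the 1D bound
\begin{equation*}
\|F\|_{L^p(\R)} \lesssim N^{1/2 - 1/p}\Bigl(\sum_{k=0}^{N-1}\|F_k\|_{L^p(\R)}^2\Bigr)^{1/2}
\end{equation*}
and then integrating in $x'$ via Minkowski on $L^{p/2}$ (valid since $p/2 \ge 1$) recovers the full $n$-dimensional inequality with the claimed constant.

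The 1D inequality I would prove by a local trigonometric-polynomial analysis. On an interval $J \subset \R$ of length $N$, the locally constant property (since $\widehat{F_k}$ sits in $I_k$, of length $N^{-1}$) lets one write $F_k(x) \approx c_k(J)\,e(xk/N)$ on $J$, so that $F(x) \approx \sum_{k=0}^{N-1} c_k(J)\,e(xk/N)$ is essentially a trigonometric polynomial with $N$ characters. Hausdorff--Young on $\T$ followed by Hölder ($p' \le 2$) gives $\|\sum_k c_k e(yk)\|_{L^p[0,1]} \le N^{1/2-1/p}(\sum |c_k|^2)^{1/2}$; rescaling back to $J$ reproduces the stated bound locally. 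Summing $p$-th powers over a tiling of $\R$ by such $J$'s and applying Minkowski on $L^{p/2}$ then globalizes. The main technical obstacle is legitimizing the locally constant approximation, since Fourier projections are not compactly supported in physical space; the standard remedy is to weight by smooth bumps adapted to $J$ (or to use a Fejér-type partition of unity), which converts the heuristic identification into an exact one up to rapidly decaying tails that are absorbed before the $\ell^2$-sum and the Minkowski step.
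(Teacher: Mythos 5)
Your argument is sound, but note that the paper itself offers no proof of Proposition \ref{p0.1}: it is quoted as the standard sharpness statement for flat (trivial) decoupling, so there is nothing internal to compare against. Both halves of your proposal are correct. The lower bound via $f=\check\phi$ and dual boxes is the intended example (the ``smooth approximation of $1_{\Nc_\delta(L)}$'' lives on the frequency side); the one computation worth tightening is $\|P_Tf\|_{L^p}$, since $\widehat{P_Tf}=\phi 1_T$ has a sharp cutoff in $\xi_1$ and is not itself a smooth bump --- either invoke $L^p$-boundedness of the Riesz projection onto $I_k$ ($1<p<\infty$), or simply build $\phi=\sum_T\phi_T$ from smooth bumps adapted to the individual $T$'s so that $P_Tf=\check\phi_T$ exactly. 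For the upper bound, your Fubini reduction to one dimension followed by the locally constant heuristic, Hausdorff--Young on $\T$, H\"older from $\ell^{p'}$ to $\ell^2$, and the two Minkowski steps does work, and you correctly flag that rigorizing the locally constant step requires weights or convolution against adapted Schwartz kernels. Be aware, though, that there is a shorter rigorous route that bypasses that machinery entirely: the map $(f_T)_T\mapsto\sum_T f_T$ is bounded $\ell^2(L^2)\to L^2$ with norm $O(1)$ by orthogonality of the disjoint Fourier supports, and $\ell^2(L^\infty)\to L^\infty$ with norm $N^{1/2}$ by the triangle inequality and Cauchy--Schwarz, so vector-valued Riesz--Thorin interpolation gives the $\ell^2(L^p)\to L^p$ norm $N^{\frac12-\frac1p}$ directly; this is the proof usually given for flat decoupling and matches your lower bound.
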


This statement shows why when obtaining $\ell^2$ decoupling at least, we should aim for maximally flat caps $\t$. For $\t$ to be maximally flat means that any enlargement of it exceeds the $\delta$-neighborhood of any plane. In other words, those larger caps are curved at scale $\delta$ and may be decoupled without violating the proposition. However, we cannot decouple maximally flat caps any further without incurring substantial loss. In this sense, decoupling partitions whose elements are each maximally flat may be loosely labeled \textit{maximal}, signaling that those elements cannot be partitioned further non-trivially. We should mention that there is a notion of small cap decoupling, which allows for smaller flat caps to be taken but only in the framework of $\ell^p$ decoupling. Demeter, Guth, and Wang address this for the parabola, two-dimensional cone, and moment curve in $\R^3$ in \cite{DGW}. 

Theorem \ref{t0.1} does not explicitly address decoupling for manifolds $\M$ having vanishing curvature. If $K \equiv 0$, decoupling for $\M$ is resolved in \cite{K} for the three-dimensional, non-planar case. When $K$ does not vanish everywhere, there is reason to believe that some form of decoupling still holds in all cases. Lojasiewicz's inequality and Puiseux series expansion (see \cite{BM} and \cite{KP}) show that curvature increases at least exponentially away from the vanishing set $\Vc$. Therefore, away from $\Vc$, the setting has some semblance to that of Theorem \ref{t0.1}. For example, concerning real-analytic surfaces, the $\ell^2$ decoupling theorem for real analytic curves (\cite{D}, Section 12.6) and also the decoupling theorem for real analytic surfaces of revolution \cite{BDK} is obtained using this insight. Furthermore, by similar methods combined with polynomial case analysis, Li and Yang \cite{LY} have recently proved $\ell^2$ and $\ell^p$ decoupling for the mixed homogeneous polynomial surfaces in $\R^3$ with bounds depending on the polynomial degree.

Our paper addresses another class of surfaces of vanishing curvature that has some overlap with the mixed homogeneous polynomial surfaces. Some of the surfaces covered by Theorem \ref{t0.2} are mixed homogeneous, but this theorem does not address all such surfaces as does \cite{LY}. Examples covered by Theorem \ref{t0.2} are all polynomial surfaces of the form \begin{equation} \label{poly} (\xi_1, \xi_2) \mapsto \sum_{i,j=0}^m A_{ij}\xi_1^i \xi_2^j \end{equation} with $A_{02}$ and $A_{21}$ both nonzero. Of course, some of these may be mixed homogeneous. The theorem extends, however, to many other surfaces too that merely appear locally of the form \eqref{poly}. These are the surfaces whose lines of curvature are all curved. The torus is one additional example of this class, but there are many perturbations of the torus that also fit this description. As well, more exotic examples may be constructed, such as by extracting from the class of canal surfaces \cite{GS}.

In the sequel, there will be some slight overlap with the methods pursued in \cite{LY}. In particular, we make use of the usual ingredients of dyadic decomposition, cylindrical decoupling, iteration, and rescaling. Whereas in \cite{LY} the argument proceeds by way of insightful polynomial manipulations (introducing dependence upon degree) and cylindrical decoupling, our paper pioneers the use of more general approximating surfaces rooted in the Taylor structure of a manifold. In particular, such surfaces generally are not cylindrical, so their decoupling theory may necessarily have an $\ell^p(L^p)$ component. Because of the nature of the $\ell^p$ decoupling constant, this detail could disrupt decoupling by way of iteration. Critically, the iterative decoupling that will be done here will always be in $\ell^2(L^p)$ rather, a nontrivial matter resolved via Taylor-based, geometric arguments. Additional introductory detail concerning this new form of iteration and its first demonstration are given respectively in Section \ref{s4.3} and in the proof of Proposition \ref{pr2.0}.

\section{The main result}

As has been portrayed in previous work, introducing a nonempty set $\Vc \subset \M$ where the Gaussian curvature vanishes alters the form of $\Pc_\delta(\M)$. Its elements are no longer all square-like but rather rectangular whose eccentricity increases as $\Vc$ is approached. Furthermore, the presence of $\Vc$ makes it possible for $\M$ to have regions $\M^+ \subset \M$ and $\M^- \subset \M$ of positive and negative Gaussian curvature respectively. Thus, a combined $\ell^2$ and $\ell^p$ decoupling inequality for $\M$ is anticipated.

When entertaining scenarios where $\Vc \ne \emptyset$, it is natural to wonder what might hold true if curvature yet persists within certain lower-dimensional submanifolds of $\M$. After all, cylindrical decoupling, defined in Section \ref{s3}, has been used in contexts of vanishing Gaussian curvature where curved arcs foliated the region to be decoupled (see \cite{BDK} and \cite{K} for example). In the general case, we look for a canonical choice of curves having nonzero curvature everywhere and foliating $\M$ in some sense. In \cite{BD3}, there is a hint toward lines of curvature (see Section \ref{s2}), when the authors obtain a nice model, the elliptic paraboloid, for a general hypersurface $S$. Thus, even when $K$ vanishes at points $q \in \M$, we are led to expect decoupling, so long as the corresponding line of curvature nevertheless has nonzero curvature at each $q$.

In this paper, it is our main goal to determine the $\ell^2$ and $\ell^p$ decoupling theory for any $C^5$ manifold $\M$ satisfying such a condition. We will only concern ourselves with partitioning over maximally flat caps; the subject of small cap decoupling for $\M$ is left open and appears to be an interesting problem. 

In keeping with convention, we initially partition $\M$ into regions $\Ac_k$ where the Gaussian curvature $K$ is approximately $2^{-k}$ in magnitude ($2^{-k} \geq \delta^{1/2}$) and treat each $\Ac_k$ individually. Such a decomposition is advisable furthermore since it renders a sharper $\ell^p$ decoupling over the region where $K < 0$. 

\begin{te} 
\label{t0.2}
Let $\M$ be a compact $C^5$ surface in $\R^3$ having no planar points and satisfying that its lines of curvature, as curves, have nonzero curvature at every point. For each $\delta > 0$, there exists a partition $\Pc_\delta(\M)$ of $\M$ into boundedly overlapping, flat rectangular caps $\t \subset \M^+ \cup \{|K| \leq \delta^{1/2}\}$ and $\t_k \subset \M^- \cap \Ac_k$ having dimensions at most $\delta^{1/4} \times \delta^{1/2}$ such that the following holds for all $2 \leq p \leq 4$. 

For every fixed $\epsilon > 0$, there exists $C_\epsilon > 0$ (depending only on $\epsilon$ and the $C^5$ norm of $\M$) such that uniformly in $\delta$

 \begin{equation} \label{***} \|f\|_{L^p(\R^3)} \leq C_\epsilon \delta^{-\epsilon}((\sum_{\t} \|P_{\Nc_\delta(\t)} f\|_{L^p(\R^3)}^2)^{1/2} + \sum_k |\{\t_k\}|^{1/2-1/p}(\sum_{\t_k} \|P_{\Nc_\delta(\t_k)}f\|_{L^p(\R^3)}^p)^{1/p}) \end{equation} for every $f$ that is Fourier supported in $\Nc_\delta(\M)$.

\end{te}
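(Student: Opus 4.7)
The plan is to perform a dyadic decomposition of $\M$ in the magnitude of the Gaussian curvature $K$ and, on each dyadic piece, to combine a Taylor-based approximation of $\M$ by simpler canonical surfaces with anisotropic rescaling and iterative cylindrical decoupling along the lines of curvature, keeping the decoupled sums in $\ell^2(L^p)$ form throughout the iteration.

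First I would set $\Ac_k = \{p \in \M : |K(p)| \sim 2^{-k}\}$ for each $k$ with $2^{-k} \geq \delta^{1/2}$, and append the residual region $\{|K| \leq \delta^{1/2}\}$ to the elliptic side. On that residual region the local principal curvatures are too small to be resolved at scale $\delta$, and the flat caps of Theorem~\ref{t0.1} already give $\ell^2(L^p)$ decoupling with no loss. Each $\Ac_k$ is then split into $\Ac_k^+ \subset \M^+$ and $\Ac_k^- \subset \M^-$. The goal on $\M^+ \cup \{|K|\leq \delta^{1/2}\}$ is pure $\ell^2(L^p)$ decoupling, while on each $\Ac_k^-$ it is an $\ell^2(L^p)$ bound carrying a single prefactor $|\{\t_k\}|^{1/2-1/p}$, mirroring the Kakeya behavior of the hyperbolic paraboloid.

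Next, on each piece I would choose local coordinates aligned with the principal directions and Taylor-expand $\M$ to order five about a reference point. The hypothesis that the lines of curvature are themselves curved, together with the $C^5$ regularity, translates into quantitative lower bounds on certain coefficients of this Taylor polynomial (morally the $A_{02}$ and $A_{21}$ featured in \eqref{poly}). These bounds let one produce a canonical polynomial approximating surface that $\delta'$-approximates the given patch for an intermediate scale $\delta' \gg \delta$. These canonical surfaces constitute the library referenced in the introduction; each of them is amenable, after an anisotropic affine rescaling, to either the $\ell^2$ decoupling of the elliptic paraboloid or the $\ell^p$ decoupling of the hyperbolic paraboloid supplied by Theorem~\ref{t0.1}.

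The decoupling is then obtained by iteration. At each stage, one applies cylindrical decoupling along one family of lines of curvature (admissible because those curves have nonzero planar curvature), then the decoupling of the canonical approximating surface, and finally rescales each output piece and repeats. The critical design point is that every step preserves the $\ell^2$ aggregation: on $\M^+$ this is transparent from \eqref{**}, but on the hyperbolic stratum $\Ac_k^-$ a naive iteration of the model $\ell^p$ decoupling would accumulate a fresh factor of $|\{\t_k\}|^{1/2-1/p}$ at every stage and destroy the bound. I therefore expect the main obstacle to be precisely this iterative $\ell^2$-preservation on $\M^-$: one must reorganize the cylindrical step so that only the fiber decoupling is performed in $\ell^p$ while the aggregation across fibers remains an honest $\ell^2$ sum. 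This is exactly the Taylor-based geometric argument the introduction advertises, and it will first appear in Proposition~\ref{pr2.0}. Once this $\ell^2(L^p)$ invariant is established, summing over the dyadic levels $\Ac_k$ and absorbing $\log(1/\delta)$ losses into $\delta^{-\epsilon}$ yields \eqref{***}.
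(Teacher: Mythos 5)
Your overall architecture (dyadic decomposition in $|K|$, principal-direction coordinates, Taylor-based canonical approximating surfaces, anisotropic rescaling, iteration) matches the paper's, but the step you yourself flag as the main obstacle --- keeping the iteration in $\ell^2(L^p)$ --- is resolved incorrectly, and this is a genuine gap rather than a presentational one. First, you mislocate the source of the $\ell^p$ danger: it is not that iterating the model $\ell^p$ decoupling on $\Ac_k^-$ "accumulates a fresh factor at every stage" (iterated $\ell^p$ decoupling constants telescope to the total cap count). The danger is that the approximating surfaces themselves, e.g.\ $\tP(\xi_1,\xi_2)=A_{0,2}\xi_2^2+A_{3,0}\xi_1^3$ from Lemma \ref{l2.2}, have a hyperbolic half ($A_{3,0}\xi_1<0$) regardless of the sign of $K$ on $\M$, so a naive application of their decoupling theory injects $\ell^p$ factors even into the elliptic part $\M^+$, where the theorem demands a pure $\ell^2$ bound. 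Your proposed fix --- "reorganize the cylindrical step so that only the fiber decoupling is performed in $\ell^p$ while the aggregation across fibers remains an honest $\ell^2$ sum" --- is not a mechanism: cylindrical decoupling here is genuinely two-dimensional parabolic decoupling extended trivially in the third variable and is already $\ell^2$; there is no fiber/cross-fiber splitting to exploit. The paper's actual resolution (Stage I of Proposition \ref{pr2.0}, Figure \ref{f1}) is geometric: one re-chooses the basepoint of the principal expansion so that the cap $Q$ lies entirely inside the elliptic region $\Ec=\{A_{3,0}\xi_1>0\}$ of the model surface, so every iterative step is a genuine $\ell^2$ decoupling, and the single $\ell^p$ factor on $\M^-$ enters only once, at the terminal application of the rescaling lemma (Stage III).

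Two further points would stop your argument from closing even granting the above. (i) You dispose of the region $\{|K|\le\delta^{1/2}\}$ by asserting that Theorem \ref{t0.1} "already gives $\ell^2(L^p)$ decoupling with no loss" there; it does not, since its constant depends on a lower bound for the principal curvatures, and this region is exactly where that bound fails. The paper must instead decouple it into $\delta^{1/4}\times\delta^{1/2}$ caps using the curvature supplied by the $A_{2,1}\xi_1^2\xi_2$ term. (ii) Your sketch never engages with the content of Proposition \ref{pr2.1}: before Proposition \ref{pr2.0} can be applied one must decouple the full quartic model $\G_\Psi$ down to $2^{-k/2}\times 2^{-k}$ caps aligned with the level sets of $K$, which requires the case analysis on the curvature of the vanishing parabola $\Cc$ and, in the degenerate case $|C^2-6D|$ small, the substitution of the curvature equation back into $\Psi$ to manufacture a better-curved auxiliary surface $\tPs$. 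Finally, the claim that the lines-of-curvature hypothesis "translates into quantitative lower bounds" on $A_{2,1}$ is itself a lemma (Lemma A of the Appendix) requiring the differential-geometric identities \eqref{a*}--\eqref{a.9}, not an observation.
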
 Some examples covered by the theorem are the torus, first dealt with in \cite{BDK}, and various perturbations of it. Notably, the torus has regions of both positive and negative Gaussian curvature. In the appendix, we show that all $\M$ are structured in this way. Additional explicit examples for $\M$ are the graphs of polynomial surfaces, restricted to a neighborhood of the origin, defined by $$(\xi_1, \xi_2) \mapsto \sum_{i,j=0}^m A_{ij}\xi_1^i \xi_2^j$$ with $A_{02}$ and $A_{21}$ both nonzero. Proposition \ref{pr2.1} explicitly establishes the decoupling theory of the graph of $$\Psi(\xi_1, \xi_2) = A_{20} \xi_1^2 + \xi_2^2 + A_{30} \xi_1^3 + \xi_1^2 \xi_2 + A_{40} \xi_1^4, \qquad \;\;\;\;\;\; |\xi_i| \leq 1.$$

In the next section, we define the lines of curvature and characterize the manifolds $\M$ satisfying the hypothesis of Theorem \ref{t0.2}.

\begin{ack*} The author is sincerely grateful to his advisor Ciprian Demeter for his iterated challenge to find the clearest expression of mathematical thought. He is also appreciative of Ciprian's continued encouragements and motivation during the writing of this manuscript and prior.

\end{ack*}

\section{Geometric preliminaries}
\label{s2}

\subsection{Graph parametrization} In order to obtain Theorem \ref{t0.2}, we must first lay some groundwork concerning the structure of $\M$. We know that $\M$ locally is given as a graph over a plane $T_q(\M)$ tangent to $\M$ at a point $q \in \M$. The point $q$ is identified as $(0,0,0)$ with respect to the coordinate system introduced by $T_q(\M)$ and $\N(q)$. Furthermore, we take $Q$ to be a closed set containing the origin in its interior, the set $\tQ = \{(\xi_1, \xi_2, \psi(\xi_1, \xi_2)): (\xi_1, \xi_2) \in Q\}$ to be a closed neighborhood of $q$ in $\M$, and also $\psi$ to satisfy \begin{equation} \label{1.0} \psi(0,0) = 0, \qquad \nabla \psi(0,0) = 0.\end{equation} The reason for taking closed sets is to ensure that all continuous functions defined on $Q$ (or $\tQ$) are bounded, a point that will be significant later. Also, let us clarify that $\psi$ is just the locally defined function that gives height of points in $\M$ above the tangent plane $T_q(\M)$. We write $\Gamma_\psi$ for the graph of $\psi$, and in the sequel, $\tilde{B}$ will always be shorthand for $\Gamma_{\psi|_B}$ for subsets $B$ contained in the domain. Taking $\tQ$ smaller if necessary, we may ensure that $\tQ$ is the graph of any height function taken with respect to any tangent plane $T_{q'}(\M)$ for $q' \in \tQ$.

When we conceive of $\M$ locally as a graph $\tQ$, our definition of the $\delta$-neighborhood changes to $$\Nc_\delta(\tQ) = \{(\xi_1, \xi_2, \psi(\xi_1, \xi_2) + v): (\xi_1, \xi_2) \in Q, |v| \in [0, \delta)\}.$$ Indeed, $\Nc_{2\delta}(\G_{\psi|_{2Q}})$ covers the $\delta$-neighborhood as previously defined for $\delta$ sufficiently small. So we suffer no loss when we work with $\psi(2Q)$ still satisfying the above conditions. Let us also mention here that the graph parametrization allows for us to conceive of $K$ as essentially the Hessian of $\psi$. This is because of the formula \begin{equation} \label{*1} K = \frac{\psi_{11}\psi_{22} - \psi_{12}^2}{(1 + \psi_1^2 + \psi_2^2)^2},\end{equation} provided to us by differential geometry. As well, there is a standard formula for the sum of $\l_1$ and $\l_2$, called the \emph{mean curvature}, and it depends only on the first and second-order derivatives of $\psi$.

\subsection{Curvature parametrization} As well, we shall want $\tQ$ to be the image of a parametrization $\x = \x(u,v)$, defined on a closed set containing $(0,0)$ in its interior, that satisfies the following property. Namely, $\x(0,0) = q$, and \begin{equation} \label{1.1} \N'(u) = -\l_1 \x_u(u,v) \end{equation} \begin{equation} \label{1.2} \N'(v) = -\l_2 \x_v(u,v), \end{equation} where $\N(u)$ and $\N(v)$ are the restrictions of $\N$ to the coordinate curves $u \mapsto \x(u,v)$ and $v \mapsto \x(u,v)$ respectively. The reader may consult \cite{DoC} as a reference, in particular to confirm that such an $\x$ exists at any non-planar point. Note that $\l_1$ and $\l_2$ are the functions giving the eigenvalues of $d\N$, which is indeed a self-adjoint map. In the literature of differential geometry, $\l_1$ and $\l_2$ are called the \emph{principal curvatures} of $\M$, the coordinate curves of $\x$ are called \emph{lines of curvature}, and their tangent lines are the \emph{principal directions}. We shall call $\x$ the \emph{curvature parametrization} of $U$, and we say that $\l_1$ and $\l_2$ respectively \emph{correspond} to the curves $\x(\cdot, v)$ and $\x(u, \cdot)$. 

\subsection{Definition of principal expansions} It is straightforward to see that the Hessian of $\psi$ is equal to $d\N$ at $q$. Thus, its eigenvalues there are the principal curvatures at $q$. So, when we take the $\xi_1, \xi_2$ coordinates with respect to the principal directions at $q$, the third-order Taylor expansion of $\psi$ appears as \begin{equation} \label{1.3} \psi(\xi_1, \xi_2) = A_{2,0} \xi_1^2 + A_{0,2} \xi_2^2 + A_{3,0} \xi_1^3 + A_{2,1} \xi_1^2\xi_2 + A_{1,2} \xi_1 \xi_2^2 + A_{0,3} \xi_2^3 + O(|(\xi_1, \xi_2)|^4), \end{equation} where $A_{2,0} = \l_1(q), A_{0,2} = \l_2(q)$, and $A_{3,0}, A_{2,1}, A_{1,2}, A_{0,3} \in \R$. We call \eqref{1.3} the \emph{principal expansion} of $\psi$ at $q$.

Throughout this paper, we are only concerned with the decoupling theory for neighborhoods $\tQ$ having $K$ vanishing at a point $\bq \in \tQ$. Thus, we specialize $\tQ$ to such a neighborhood. Since $\bq$ is not planar, we may then state that $$\l_1(\bq) = 0,$$ while $$\l_2(\bq) \ne 0.$$ Restricting the size of $\tQ$ if necessary, we may arrange for $$\l_2 \sim 1$$ throughout $\tQ$. The Gaussian curvature is then $ \sim \l_1$ within $\tQ$.

It is helpful to look for connections between the two parametrizations for $\M$ that we have, namely the graph parametrization and the curvature parametrization. In particular, the hypothesis of Theorem \ref{t0.2} carries an implication for the expression \eqref{1.3}. From Lemma A in Section \ref{a1} of the Appendix, we know that the Taylor coefficient $A_{2,1}$ for a principal expansion \eqref{1.3} based at $\bq$ is nonzero,  since it also holds that $\l_1$ vanishes at $\bq$. Taking $U$ again smaller, we consequently have \begin{equation} \label{1.4} A_{2,1} \sim 1 \end{equation} for each principal expansion \eqref{1.3} taken at a point $q \in \tQ$.

\subsection{A canonical approximating surface for $\M$} We thus are beginning to get a view of the local geometry of $\M$ near the vanishing set $\Vc$ of $K$. So far, we know that it is characterized as a finite union of graphs whose functions satisfy $\eqref{1.4}$. In light of \eqref{1.4}, the $A_{2,1} \xi_1^2 \xi_2$ term introduces principal curvature along the $\xi_1$ direction, replacing the possibly very small curvature $A_{2,0}$. We would like to leverage this for decoupling purposes. An ideal way to do this is to construct from \eqref{1.3} a well approximating graph $G$ for $\tQ$ whose decoupling theory is more tangible and also facilitates induction on scales. Simply put, we want $G$ to decouple some $\delta^{-C}$ enlargement of $\Nc_\delta(\M)$ into curved boxes within which $G$ $\delta$-approximates $\M$, that is, lies within $\delta$ of $\M$.  Of course, the function defining $G$ should include the $A_{0,2} \xi_2^2$ term in addition to $A_{2,1} \xi_1^2 \xi_2$, so that the $\xi_2$ length may also be partitioned. If we were to stop here, the decoupling theory of \begin{equation} \label{1.5} (\xi_1, \xi_2) \mapsto A_{0,2} \xi_2^2 + A_{2,1} \xi_1^2 \xi_2 \end{equation} gives caps of dimensions at most $\delta^{1/4} \times \delta^{1/2}$ (see Proposition \ref{pr2.1}). But then, we are faced with the obstacle that there is no enlargement of $\Nc_\delta(\M)$ that \eqref{1.5} decouples into caps within which \eqref{1.5} is $\delta$-approximating. We are forced to include $A_{2,0} \xi_1^2, A_{3,0} \xi_1^3$ and even a fourth-order term $A_{4,0} \xi_1^4$. Within caps of dimensions $\delta^{1/5} \times \delta^{2/5}$, we know that the surface graphed by \begin{equation} \label{1.6} \Psi(\xi_1, \xi_2) = A_{2,0} \xi_1^2 + A_{0,2} \xi_2^2 + A_{3,0} \xi_1^3 + A_{2,1} \xi_1^2\xi_2 + A_{4,0}\xi_1^4 \end{equation} is $O(\delta)$-approximating for $\M$, where the constant is the $C^5$ norm of $\M$. Therefore, it suffices for our purposes to derive the decoupling theory of this graph. We obtain its $\ell^2-\ell^p$ decoupling theory in Section \ref{s6}.

\section{Tools for Acquiring $\Pc_\delta(\M)$} \label{s3}

Let us clarify why it suffices to find $\Pc_\delta(\tQ)$ for $\tQ$ as described in the previous section. Since $\Vc$ is compact, we may cover it by finitely many such neighborhoods $\tQ_i$, where each $Q_i$ is taken to be  a closed square. Once we uncover the decoupling partition $\Pc_\delta(\tQ_i)$ of $\Nc_\delta(\tQ_i)$, Theorem \ref{t0.2} then follows since Theorem \ref{t0.1} directly applies to $\M \backslash (\bigcup_i \tQ_i)$, and the various elements of $\bigcup \Pc_\delta(\tQ_i) \cup \Pc_\delta(\M\backslash (\bigcup_i \tQ_i))$ may then be ``patched" together into a uniform decoupling partition by standard Fourier projection theory.

\subsection{Basic decoupling} What precedes the patching process is an obvious predecessor of decoupling that we shall refer to as \emph{basic decoupling}. If we ever want to decouple a cap $S \subset \M$ into $O(1)$ many caps $S_1, \dots, S_N$, we may apply first the triangle inequality and then H\"{o}lder's inequality to obtain $$\|P_{\Nc_\delta(S)}f\|_p \leq N^{1 - 1/r}(\sum_{i=1}^N \|P_{\Nc_\delta(S_i)} f\|_p^r)^{1/r}.$$ In this way, we may initially write $$\|f\|_p \lesssim_\M ( \|P_{\Nc_\delta(\M \backslash (\bigcup_i \tQ_i))} f\|_p^r + \sum_i \|P_{\Nc_\delta(Q_i)} f\|_p^r)^{1/r},$$ $r = 2$ or $p$. Basic decoupling will recur repeatedly in the sequel as a means of decoupling possible $O(1)$ enlargements of the caps that will be desired. 

\subsection{Cylindrical decoupling} There is another tool for obtaining decoupling inequalities, which we shall need for the proof of Theorem \ref{t0.2}. It is also proven using elementary facts from analysis. The method is called \emph{cylindrical decoupling} and allows for us to extend decoupling results in $n$ dimensions to $n+1$ dimensions. From an inequality $$\|f\|_p \leq C (\sum_i \|P_{S_i} f\|_p^r)^{1/r}$$ that holds for all $f$ Fourier supported in $\bigcup S_i \subset \R^n$, we may deduce the inequality $$\|f\|_p \leq C (\sum_i \|P_{S_i \times \R} f\|_p^r)^{1/r}$$ for all $f$ Fourier supported in $\bigcup S_i \times \R$. The proof proceeds from Fubini's theorem and Minkowski's inequality.

\subsection{Type II iterative decoupling} \label{s4.3} One of the key contributions of this paper is an additional tool for decoupling that is more novel to the field. It is \emph{type II iterative decoupling}, so named in view of its predecessor which we call here \emph{type I iterative decoupling}. Type I iterative decoupling first appeared in \cite{PS} and was also used in Section 7 of \cite{BD3} to extend the $\ell^2$ decoupling of the paraboloid to arbitrary hypersurfaces $S$ with all positive principal curvatures. What occurs there is that $S$ is locally modeled by the surface defined by the second-order Taylor terms from \eqref{1.3}, and the remaining error is such as to allow for decoupling into smaller lengths via that model surface. In that scenario, the argument is carried out as an induction on scales; however, in Section 8 of \cite{BD3} and Section 3 of \cite{K}, we see that it can be performed as an iterative scheme where at each step we apply the decoupling theory of our model surface. 

The latter method correlates better with this current paper. In Section \ref{s6}, we will utilize the implied presence of model surfaces $\G_\Phi$ delivered to us by certain terms within \eqref{1.3}. The remaining Taylor terms will be treated as error that is in fact self-improving, allowing us to steadily decrease the size of our caps via decoupling with respect to $\G_\Phi$. Where type II decoupling crucially differs from type I decoupling lies in the limited efficacy of these scale-based surface approximations. On the one hand, they are limited concerning the scale to which they decouple $\|P_{\Nc_\delta(\M)} f\|_p$. At best, only scales at the rescaling threshold can be reached, and sometimes larger intermediate scales instead are attained. Furthermore, type II iterative decoupling never tangibly reaches the target dimensions that are in view. Rather, these are attained in the limit.

Though the iterations occur with respect to steadily varying surfaces, standard Fourier projection theory enables us to preserve the rectangular aspect of the caps $\th_i$. Note that decoupling with respect to some surface $\G = \G_\Phi$ partitions $\Nc_\delta(\th)$ by rectangular slabs $\Bc$ whose orientations depend on the principal directions of $\G$ (not $\M$). Since $\G$ varies throughout $\M$ (because \eqref{1.3} varies throughout $\M$), we will ultimately obtain a rather irregular, at best many-sided polygonal shape if we are not careful. However, supposing that the length dimensions of $\th_i$ at each step are sufficiently small, there will be $O(1)$ overlap between the $\Bc$ and rectangular caps $\th_{i+1} \subset \th_i$ parallel to the principal directions of $\M$. Thus, Fourier projection theory for rectangular regions enables us to transform the decoupling via $\G$ into one that partitions $\th_i$ into the $\th_{i+1} $. The $\th_{i+1}$ will have the same length dimensions as $\Bc$.

\subsection{Affine equivalence} Lastly, given complex-valued functions $f$ and $g$, we say that $f$ is \emph{affinely equivalent} to $g$ if there exists an invertible affine map $$\A(\xi) = \L \xi + a$$ such that $$\hat{g} =  \det(\L)\hat{f} \circ \A.$$ Change of variables on the frequency side shows that $$|P_S f(x)| = |P_{\A^{-1}(S)}g(\L^T x)|$$ for all $S \subset \R^3$. The latter is Fourier supported in $\A^{-1}(S)$, and so a decoupling inequality holding for $\A^{-1}(S)$ $$ \|P_{\A^{-1}(S)}g\|_p \leq C_1 (\sum_{\A^{-1}(S_i) \subset \A^{-1}(S)} \|P_{\A^{-1}(S_i)}g\|_p^r)^{1/r} $$ via change of variables on the spatial side implies the analogous one $$ \|P_{S}f\|_p \leq C_1(\sum_{S_i \subset S} \|P_{S_i}f\|_p^r)^{1/r}$$for $S$. This observation is valuable to decoupling theory. Our first example of an affine equivalence is provided in the proof of the rescaling lemma, which we now present.

\section{The role of rescaling} \label{s4}

In this section, we begin to prove decoupling results which culminate in Theorem \ref{t0.2}. The range for the Lebesgue index $p$, both here and in the following sections, will always be $2 \leq p \leq 4$. Although we begin with fairly small scales in the next lemma, we progressively increase the size of the caps to be decoupled over until ultimately the model surface $\G_\Psi$ is resolved.

The following lemma launches our investigation into the $\ell^2$ and $\ell^p$ decoupling theory of our model surface $\G_\Psi$. It applies to any surface lacking planar points, and so is written in corresponding generality. The proof is a formal extension of the rescaling arguments in \cite{BDK}. 

\begin{lem}{(Rescaling lemma)} \label{l2.1} Let $\tQ$ be the cap \eqref{1.3} projecting onto $[-B_1, B_1] \times [-B_2, B_2]$ and subject to $A_{0,2} \sim 1$ but with $A_{2,1}$ momentarily not assumed to be $\sim 1$ here. We assume that the Gaussian curvature $K$ of $\tQ$ is essentially constant throughout and also crucially that \begin{equation} \label{1.7} B_2 \leq |K|^{1/2} B_1. \end{equation}

 \begin{enumerate} \item Suppose that $|A_{3,0}| \lesssim \sqrt{|A_{2,0}|}$. Then, if $B_1 \leq \sqrt{|A_{2,0}|}$, (maximally) flat cap decoupling holds within $\tQ$.

\item Suppose instead that $\sqrt{|A_{2,0}|} \lesssim |A_{3,0}|$. Then, if $B_1 \leq |\frac{A_{2,0}}{A_{3,0}}|$, flat cap decoupling holds within $\tQ$. 
\end{enumerate} \bigskip
\ni The maximally flat caps obtained for the decoupling inequality, or rather their projections onto $T_p(\tQ)$,  have dimensions at most $|A_{2,0}|^{-1/2}\delta^{1/2} \times \delta^{1/2}$ relative to the principal directions at $p$. As well, the decoupling inequality is either $\ell^2(L^p)$ or $\ell^p(L^p)$ according as $K$ is positive or negative.

\end{lem}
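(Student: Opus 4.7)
The proof proceeds by an anisotropic affine equivalence that normalizes the leading quadratic part of $\psi$, after which Theorem~\ref{t0.1} applies directly on the rescaled surface. Define the affine map
\[
\A(\eta_1,\eta_2,\eta_3) = \bigl(|A_{2,0}|^{-1/2}\eta_1,\ A_{0,2}^{-1/2}\eta_2,\ \eta_3\bigr),
\]
and set $\tilde\psi(\eta_1,\eta_2) := \psi\bigl(|A_{2,0}|^{-1/2}\eta_1,\, A_{0,2}^{-1/2}\eta_2\bigr)$, so that $\A^{-1}(\tQ)$ is the graph $\tilde Q$ of $\tilde\psi$ over the box $[-|A_{2,0}|^{1/2}B_1,|A_{2,0}|^{1/2}B_1] \times [-A_{0,2}^{1/2}B_2,A_{0,2}^{1/2}B_2]$. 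The principal expansion of $\tilde\psi$ begins with $\operatorname{sgn}(A_{2,0})\eta_1^2 + \eta_2^2$, giving Hessian determinant $4\operatorname{sgn}(A_{2,0}A_{0,2})$ at the origin, while the remaining Taylor terms of $\psi$ pull back with prefactors that are appropriate negative powers of $|A_{2,0}|$ and $A_{0,2}$.

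The crux is to verify that $\tilde Q$ satisfies the hypothesis of Theorem~\ref{t0.1} with constants independent of the rescaling parameters. Inspecting the Hessian of $\tilde\psi$ across the rescaled cap, each perturbation away from the paraboloid model is of the form $|A_{3,0}|B_1/|A_{2,0}|$, $|A_{2,1}|B_2/(|A_{2,0}|A_{0,2}^{1/2})$, $A_{0,2}B_2^2/(|A_{2,0}|B_1^2)$, or a quartic-order analogue. Using $A_{0,2}\sim 1$, the essential constancy of $K$ (whence $|K|\sim|A_{2,0}|$), and $B_2\leq|K|^{1/2}B_1$, together with the case hypothesis ($|A_{3,0}|\lesssim\sqrt{|A_{2,0}|}$ and $B_1\leq\sqrt{|A_{2,0}|}$ in case 1, or $B_1\leq|A_{2,0}/A_{3,0}|$ in case 2), each of these is $O(1)$. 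Taking the implicit constants in the hypothesis small enough, these perturbations are strictly dominated by the leading quadratic Hessian, so the Gaussian curvature of $\tilde Q$ has magnitude $\sim 1$ and retains the sign of $K$ uniformly throughout the rescaled cap.

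The conclusion then follows by transfer. Applying Theorem~\ref{t0.1} to $\tilde Q$ at scale $\delta$ produces a partition of $\tilde Q$ into flat $\delta^{1/2}$-square caps with the corresponding $\ell^2(L^p)$ decoupling when $K>0$ (resp.\ $\ell^p(L^p)$ when $K<0$). Since $\A$ acts as the identity in the $\xi_3$ direction, the $\delta$-neighborhoods of $\tilde Q$ and $\tQ$ correspond under $\A$, so the affine-equivalence formalism of Section~\ref{s3} transfers the inequality to $f$ Fourier supported in $\Nc_\delta(\tQ)$, with each $\delta^{1/2}$-square in $\tilde Q$ pulling back to a rectangular cap in $\tQ$ of dimensions $|A_{2,0}|^{-1/2}\delta^{1/2}\times A_{0,2}^{-1/2}\delta^{1/2}\sim|A_{2,0}|^{-1/2}\delta^{1/2}\times\delta^{1/2}$, aligned with the principal directions at $p$ and inheriting flatness from its $\tilde Q$ preimage. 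The principal difficulty in this outline is the uniform non-degeneracy of the curvature of $\tilde Q$ in case 2, where the cubic term $A_{3,0}\xi_1^3$ competes with $A_{2,0}\xi_1^2$; the cutoff $B_1\leq|A_{2,0}/A_{3,0}|$ is precisely what forces the cubic contribution to the rescaled Hessian to be bounded and therefore absorbable into the quadratic one.
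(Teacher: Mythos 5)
Your argument is essentially the paper's own proof: an anisotropic affine equivalence normalizing the quadratic part of $\psi$ so that Theorem \ref{t0.1} applies to the rescaled graph with uniform constants, followed by transfer back; the only cosmetic difference is that the paper rescales the domain to unit size and compensates with a vertical dilation by $C=(A_{2,0}^{1/2}B_1B_2)^{-1}$, whereas you absorb that dilation into the horizontal scalings. Your termwise bound on the Hessian perturbations is serviceable, but note that the cleanest route to $\det D^2\tilde\psi\sim\pm1$ (the one the paper takes) is simply the chain-rule scaling of the Hessian determinant combined with the hypothesis that $K$ is essentially constant on $\tQ$, which sidesteps any worry about the implicit constants in the case hypotheses.
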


\begin{proof} 

Pursuant to the title of the lemma, it suffices to rescale $\psi$ to a function $\tilde{\psi}$ whose principal curvatures are bounded away from zero, whose $C^3$ norm is $O(1)$, and whose domain has $O(1)$ dimensions. For then, Theorem \ref{t0.1} applies to the graph of $\Psi$, and subsequently scaling back to the original coordinates produces the caps of Theorem \ref{t0.2}.

In fact, once we know that the $C^3$ norm of $\tilde{\psi}$ is $O(1)$, we only need to verify that the Hessian of $\tilde{\psi}$ is approximately 1. This is because we will have determined inequalities $$|\l_1 + \l_2| \lesssim 1$$ $$|\l_1 \l_2| \gtrsim 1$$ holding true throughout $\tQ$, guaranteeing that the principal curvatures $\l_1$ and $\l_2$ are both $\sim 1$. 

Let us consider a judicious choice for $\tp$. Set $C = (A_{2,0}^{1/2}B_1B_2)^{-1}$, and define $\tilde{\psi}(\xi_1, \xi_2) = C\psi(B_1\xi_1, B_2\xi_2)$. Then, $$ \tilde{\psi}(\xi_1, \xi_2) = C(A_{2,0}B_1^2\xi_1^2 + A_{0,2} B_2^2 \xi_2^2 + A_{3,0} B_1^3 \xi_1^3 + A_{2,1} B_1^2B_2 \xi_1^2\xi_2$$ \begin{equation} \label{2.2} + A_{1,2} B_1B_2^2\xi_1\xi_2^2 + A_{0,3}B_2^3\xi_2^3 + O(|(B_1\xi_1, B_2\xi_2)|^4)), \end{equation} and $\xi_1, \xi_2 \lesssim 1$.

Termwise differentiation applied to \eqref{2.2} suffices when verifying $C^3$ boundedness. Since the coordinate lengths are $O(1)$ by hypothesis, we are only required to check that the coefficients for each term are $O(1)$, and the specifications fixed upon $B_1$ and $B_2$ ensure this in particular for the coefficient of the $\xi_1^3$-term. As well, noting that by hypothesis $$K \sim A_{2,0},$$ we may check that $$\det D^2\tilde{\psi} = \tilde{\psi}_{\xi_1\xi_1}\tilde{\psi}_{\xi_2\xi_2} - (\tilde{\psi}_{\xi_1\xi_2})^2 = C^2B_1^2B_2^2\det D^2\psi \sim A_{2,0} A_{2,0}^{-1} = 1$$ in both of the cases mentioned above.

Referring back to the harmonic analysis, rescaling $\tQ$ amounts to rewriting $$f(x) = \int_{Q \times [-\delta, \delta]} \hat{f}(\xi, \psi(\xi) + v) e(x \cdot (\xi, \psi(\xi) + v)) d\xi dv$$ as $$g(B_1x_1, B_2x_2, C^{-1}x_3) = B_1B_2C^{-1} \int_{Q' \times [-C\delta, C\delta]} \hat{f}(B_1 \xi_1, B_2 \xi_2, \psi(B_1\xi_1, B_2\xi_2) + v) \cdot $$ $$e((B_1x_1, B_2x_2, C^{-1}x_3) \cdot (\xi_1, \xi_2, C\psi(B_1\xi_1, B_2\xi_2) + v)) d\xi dv,$$ using change of variables. ($Q'$ is a rectangle with $O(1)$ dimensions.) The functions $g$ and $f$ comprise our first example of affine equivalence. $g$ is Fourier supported in the $C\delta$-neighborhood of the graph of $\tilde{\psi}$, so Theorem \ref{t0.1} yields a decoupling of $g$ over caps $\t'$ of dimensions $(C\delta)^{1/2}$. In turn, reversing the previous change of variables yields that each $\|P_{\Nc_{C\delta}(\t')}g\|_p$ may be replaced with $\|P_{\Nc_\delta(\t)}f\|_p$ within inequality \eqref{*} or \eqref{**}, where $\t$ has dimensions $|A_{2,0}|^{-1/2}\delta^{1/2} \times \delta^{1/2}$. In cases where only an $\ell^p$ decoupling is possible, the final decoupling constant will be $$C_\epsilon \delta^{-\epsilon}((C\delta)^{-1})^{1/2-1/p} = C_\epsilon \delta^{-\epsilon} (A_{2,0}^{1/2}B_1B_2 \delta^{-1})^{1/2-1/p} = C_\epsilon \delta^{-\epsilon} |\{\t\}|^{1/2-1/p}.$$ 

Let us confirm that $\t$ is flat for each case. Notice that we may assume that \begin{equation} \label{**2.1} A_{3,0}^{-2}A_{2,0}^3 \geq \delta. \end{equation} For if not, then the caps described by the hypothesis are contained within flat caps, as follows. When $A_{3,0} \lesssim \sqrt{A_{2,0}}$, $$A_{2,0} \xi_1^2 + A_{0,2}\xi_2^2 \leq (1 + A_{0,2})A_{2,0}^2 \lesssim \delta,$$ with the third-order Taylor error being clearly $O(\delta)$. While if $\sqrt{A_{2,0}} \lesssim A_{3,0}$, $$A_{2,0} \xi_1^2 + A_{0,2} \xi_2^2 \leq (1 + A_{0,2})A_{2,0}^3A_{3,0}^{-2} \lesssim \delta$$  and the third-order Taylor error is confirmed to be $O(\delta)$ too by the last inequality. It can be confirmed in both cases that the dimension bounds specified at the end of the lemma hold in this scenario.

It is already clear of course that the second order terms in \eqref{2.2} are $O(\delta)$ when $q \in \t$, but \eqref{**2.1} guarantees that the third-order Taylor error for this particular principal expansion is $O(\delta)$ too. It is a direct check that we will leave to the reader.

\end{proof}

In the sequel, it will be enough to secure length bounds $$B_1 \leq \sqrt{|A_{2,0}|}, \qquad B_2 \leq |A_{2,0}|$$ or alternatively $$B_1 \leq \frac{|A_{2,0}|}{|A_{3,0}|}, \qquad B_2 \leq \frac{|A_{2,0}|^{3/2}}{|A_{3,0}|}.$$ For then, a type II decoupling scheme, described in the next section, with respect to the parabolic cylinder $$(\xi_1, \xi_2) \mapsto A_{0,2} \xi_2^2$$ secures \eqref{1.7}.

An application of Lemma \ref{l2.1} is the following. We shall need it for proving Proposition \ref{pr2.0}, which establishes the decoupling theory for caps significantly larger than those addressed in the previous lemma.

\begin{lem} \label{l2.2}
 Let $\delta > 0$. For each $\delta^{1/3} \leq 2^{-k} \leq 1$, partition $\{(\xi_1, \xi_2): 2^{-k} \leq |\xi_1| \leq 2^{-k+1}, |\xi_2| \leq 1 \}$ into axis-parallel rectangles $\Rc_k$ of dimensions essentially $2^{k/2}\delta^{1/2} \times \delta^{1/2}$. As well, let $\Rc$ denote axis-parallel rectangles of dimensions $\delta^{1/3} \times \delta^{1/2}$ that partition the remaining portion of the set where $0 \leq |\xi_1| \leq \delta^{1/3}$. We will distinguish rectangles lying on the right of the $\xi_2$ axis and those lying on the left as $\Rc_k^+$ and $\Rc_k^-$ respectively.
 
 Then, $\{\tilde{\Rc_k}\} \cup \{\tilde{\Rc}\}$ is a decoupling partition comprised of flat caps of the graph of $$\Phi(\xi_1, \xi_2) = \xi_1^3 + \xi_2^2, \qquad (\xi_1, \xi_2) \in [-1,1]^2.$$  

The decoupling inequality appears as \begin{equation} \label{**2.3} \|f\|_p \leq C_\epsilon \delta^{-\epsilon} ((\sum_\Rc \|P_{\Nc_\delta(\tilde{\Rc})} f\|_p^2+ \sum_k \sum_{\Rc_k^+ }\|P_{\Nc_\delta(\tilde{\Rc}_k^+)}f\|_p^2)^{1/2}$$ $$+ \sum_k |\{\Rc_k^- \}|^{1/2-1/p} (\sum_{\Rc_k^-} \|P_{\Nc_\delta(\tilde{\Rc}_k^-)}f\|_p^p)^{1/p}),\end{equation} holding true for $f$ Fourier supported within the $\delta$ neighborhood of $\G_\Phi$.

\end{lem}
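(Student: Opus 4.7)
The plan is to reduce to the Rescaling Lemma \ref{l2.1} on each dyadic piece in $\xi_1$, using a preparatory cylindrical decoupling to shape the sub-caps into the right aspect ratio. First, by the triangle inequality on the Fourier side I split $\|f\|_p \le \|f^+\|_p + \|f^0\|_p + \|f^-\|_p$, where the superscripts denote Fourier restriction to $\{\xi_1 > \delta^{1/3}\}$, $\{|\xi_1|\le\delta^{1/3}\}$, and $\{\xi_1 < -\delta^{1/3}\}$ respectively.

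For $f^0$, since $|\xi_1|\le\delta^{1/3}$ forces $|\xi_1^3|\le\delta$, the graph $\G_\Phi|_{|\xi_1|\le\delta^{1/3}}$ lies within the $O(\delta)$-neighborhood of the parabolic cylinder $\{z=\xi_2^2\}$. Combining $\ell^2$ decoupling of the one-dimensional parabola with cylindrical decoupling in the $\xi_1$-direction (\S\ref{s3}) yields the $\ell^2$ decoupling into the caps $\tilde\Rc$ of dimensions $\delta^{1/3}\times\delta^{1/2}$.

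For $f^{\pm}$, I further partition dyadically in $\xi_1$ into strips $S_k = \{|\xi_1|\in[2^{-k},2^{-k+1}]\}\times[-1,1]$ for $\delta^{1/3}\le 2^{-k}\le 1$, and handle each $S_k$ in two sub-steps. First, $\xi_1^3$ varies by $O(2^{-3k})\ge\delta$ on $S_k$, so $\G_\Phi|_{S_k}$ is contained in the $O(2^{-3k})$-neighborhood of the parabolic cylinder; cylindrical decoupling of the 1D parabola \emph{at scale $2^{-3k}$} therefore delivers an $\ell^2$ subdivision of $S_k$ into sub-rectangles of dimensions $2^{-k}\times 2^{-3k/2}$. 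Second, I apply Lemma \ref{l2.1} on each such sub-rectangle: Taylor expansion of $\Phi=\xi_1^3+\xi_2^2$ around any base point $(\xi_1^\ast,\xi_2^\ast)\in S_k$ gives $A_{2,0}\sim\pm 2^{-k}$, $A_{3,0}\sim 1$, $A_{0,2}\sim 1$, with the remaining coefficients vanishing, placing the analysis squarely in Case (2); the hypotheses $B_1\sim 2^{-k}\lesssim|A_{2,0}/A_{3,0}|$, $B_2\sim 2^{-3k/2}\lesssim|K|^{1/2}B_1$, and $K\sim\pm 2^{-k}$ essentially constant are all met, and the principal directions coincide with the $\xi_1,\xi_2$ axes since the mixed partials of $\Phi$ vanish identically. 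The lemma then produces caps of dimensions $|A_{2,0}|^{-1/2}\delta^{1/2}\times\delta^{1/2}\sim 2^{k/2}\delta^{1/2}\times\delta^{1/2}$, exactly $\tilde\Rc_k^\pm$, via an $\ell^2(L^p)$ inequality on positive strips and an $\ell^p(L^p)$ inequality carrying constant $|\{\Rc_k^-\}|^{1/2-1/p}$ on negative strips.

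To assemble, the $\ell^2$ bounds from $f^0$ and the positive strips pool into the single $\ell^2$ sum of \eqref{**2.3}: combining across $k$ for $f^+$ uses basic decoupling (\S\ref{s3}) with exponent $r=2$, whose loss $(\log\delta^{-1})^{1/2}$ is absorbed by $\delta^{-\epsilon}$. The $\ell^p$ contributions from the negative strips sum over $k$ by the triangle inequality, producing the second term of \eqref{**2.3}. The main technical obstacle will be the intermediate cylindrical decoupling on each $S_k$: rigorously verifying that $\Nc_\delta(\G_\Phi|_{S_k})$ embeds into a controlled normal neighborhood of the parabolic cylinder at the coarser scale $2^{-3k}$ so that $\ell^2$ parabolic decoupling legitimately effects the $\xi_2$-subdivision, and then cleanly chaining this $\ell^2(L^p)$ step with the subsequent Rescaling-Lemma bound at scale $\delta$ without conflating the two curvature signs.
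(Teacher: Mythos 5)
Your proposal is correct and follows essentially the same route as the paper: a dyadic decomposition in $\xi_1$ via the triangle inequality, cylindrical decoupling against the parabolic cylinder $\xi_2^2$ at the natural scale ($\delta$ on the central strip, $2^{-3k}$ on the dyadic strips) to produce the $2^{-k}\times 2^{-3k/2}$ intermediate caps, and then case (2) of the Rescaling Lemma \ref{l2.1} applied to the Taylor expansion at a base point in each strip (the paper carries this out via the explicit shear $\xi_1=\xi_1'+2^{-k}$, $\xi_2=\xi_2'+b$, yielding the principal expansion $\xi_2'^2+3(2^{-k})\xi_1'^2+\xi_1'^3$, which is the same observation as your diagonal-Hessian remark), with H\"older supplying the $\ell^p$ factor on the negative-curvature side.
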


\begin{proof}

Let $\delta > 0$ be given. The Gaussian curvature is computed as $\sim \xi_1$. Therefore, we first apply the triangle inequality in order to partition dyadically the $(\xi_1, \xi_2)$-plane into axis-parallel slabs $\Sc, \Sc_k^\pm$ of horizontal width $2^{-k}$ and distance $2^{-k}$ from the line $\xi_1 = 0$, $\delta^{1/3} \leq 2^{-k} \lesssim 1$. Initially, we have $$\|f\|_p \leq \|P_{\Nc_\delta(\tilde{\Sc})}f\|_p + \sum_{2^{-k} > \delta^{1/3}} (\|P_{\Nc_\delta(\tilde{\Sc}_k^+)}f\|_p + \|P_{\Nc_\delta(\tilde{\Sc}_k^-)}f\|_p).$$ 

Within the region $\Sc = \{(\xi_1, \xi_2): |\xi_1| \leq \delta^{1/3}\}$, \begin{equation} \label{l1} \Phi(\xi_1, \xi_2) = \xi_2^2 + O(\delta), \end{equation} whereas within $|\xi_1| \sim 2^{-k},$ we have $$\Phi(\xi_1, \xi_2) = \xi_2^2 + O(2^{-3k}).$$ We may then apply cylindrical decoupling with respect to the parabolic cylinder $$(\xi_1, \xi_2) \mapsto \xi_2^2$$ in order to obtain $\ell^2$ decoupling partitions with caps projecting down as $$\{(\xi_1, \xi_2) : |\xi_1| \in [0, \delta^{1/3}), \xi_2 \in [b, b+\delta^{1/2})\} $$ and caps $\th_k^\pm$ projecting down as $$\{(\xi_1, \xi_2) : |\xi_1| \in [2^{-k}, 2^{-k+1}), \xi_2 \in [b, b+2^{-3k/2})\}$$ respectively. We then apply H\"older's inequality in order to derive an $\ell^p$ decoupling inequality over the caps $\th_k^- \subset \tilde{\Sc}_k^-$: \begin{equation} \label{*2.3} \|P_{\Nc_\delta(\tilde{\Sc}_k)}f\|_p \leq  C_\epsilon \delta^{-\epsilon} |\{\th_k^-\}|^{1/2-1/p}(\sum_{\th_k^-} \|P_{\Nc_\delta(\th_k^-)} f\|_p^p)^{1/p}. \end{equation}

The $\delta^{1/3} \times \delta^{1/2}$ caps are flat; however, the $2^{-k} \times 2^{-3k/2}$ caps $\th_k$ must be partitioned further. For this, we shall resort to Lemma \ref{l2.1}, once we have arranged the appropriate setting. Lemma \ref{l2.1} applies only to the principal expansions of $\G_\Phi$, a fact which almost impedes further decoupling of $\th_k$ via rescaling since it is unwieldy algebraically to determine the principal expansions explicitly. But, as far as the Fourier integral is concerned, such an expansion of the graphing function for each $\th_k$ can be easily obtained by a simple linear reparametrization of $\Phi$. Setting \begin{equation} \label{*2.4} \xi_1 = \xi_1' + 2^{-k}, \qquad \xi_2 = \xi_2' + b, \end{equation} $\Phi(\xi_1, \xi_2)$ becomes \begin{equation} \label{*2.5} \Phi(\xi_1', \xi_2') = \xi_2'^2 + \xi_1'^3 + 3(2^{-k})\xi_1'^2 + 2b\xi_2' + 3(2^{-2k})\xi_1' + 2^{-3k} + b^2. \end{equation} By change of variables then, $$f(x) = \int \hat{f}(\xi_1' + 2^{-k}, \xi_2' + b, \Phi(\xi_1', \xi_2') + v) e(x \cdot(\xi_1' + 2^{-k}, \xi_2' + b, \psi(\xi_1', \xi_2') +v)) d\xi' dv$$ and by simple algebra, we obtain $$|f(x)| = |\int \hat{f}(\xi_1' + 2^{-k}, \xi_2'+b, \Phi(\xi_1', \xi_2') + v) e((x_1 + 3(2^{-2k})x_3, x_2 +2bx_3, x_3) \cdot$$ \begin{equation} \label{*2.5} (\xi_1', \xi_2', \xi_2'^2 + 3(2^{-k})\xi_1'^2 + \xi_1'^3+v)) d\xi' dv|. \end{equation} The graphing function \begin{equation} \label{*2.6*} \tilde{\Phi}(\xi_1', \xi_2') = \xi_2'^2 + 3(2^{-k})\xi_1'^2 + \xi_1'^3 \end{equation} appearing in \eqref{*2.5} is indeed a principal expansion, and notably its coefficients and the dimension bounds on $\xi_1'$ and $\xi_2'$ satisfy the second case of the rescaling lemma. Applying Lemma \ref{l2.1} and undoing the change of variables expressed in \eqref{*2.4}, we thus obtain caps $\tilde{\Rc}_k$ of dimensions $2^{k/2}\delta^{1/2} \times \delta^{1/2}$ that partition each $\th_k$ with corresponding decoupling constant \begin{equation} \label{*2.4*} C_\epsilon \delta^{-\epsilon} |\{\tilde{\Rc}_k \subset \th_k^-\}|^{1/2-1/p} \end{equation} for those $\th_k^- \subset \tilde{\Sc}_k^-$.
The full $\ell^p$ decoupling constant thus obtained at this stage is simply the multiplication of that in inequality \eqref{*2.3} with \eqref{*2.4*}.

\end{proof}

\begin{re} Before continuing, let us comment on how rescaling allows us to extend the result of Lemma \ref{l2.2}. Namely, consider how we might decouple a function Fourier supported in the $\delta$-neighborhood of $$\tP(\xi_1, \xi_2) = \xi_2^2+ A\xi_1^3, \qquad (\xi_1, \xi_2) \in [-1, 1] \times [-1, 1]$$ for some arbitrary $O(1)$ $A$. Composing $\tP$ with the linear map \begin{equation} \label{**2.61} (\xi_1, \xi_2) \mapsto (A^{-1/3}\xi_1, \xi_2)\end{equation} results in $\Phi$ restricted to the domain $[-A^{1/3}, A^{1/3}]\times [-1,1]$. Then we apply Lemma \ref{l2.2} as written and undo the change of variables \eqref{**2.61} to acquire caps of dimensions at most $A^{-1/3}\delta^{1/3} \times \delta^{1/2}$ that comprise a decoupling partition of $\G_{\tP}$. \end{re}

The value of Lemma \ref{l2.2} is that we may now approximate local regions of more general $\M$ by the truncation $A_{0,2}\xi_2^2 + A_{3,0}\xi_1^3$ of \eqref{1.3}. In this way, we will be able to decouple more sizable caps within $\M$ into caps having the specified dimensions required for Lemma \ref{l2.1}.

\section{Decoupling for $\G_\Psi$} \label{s6}

As developed in \cite{BDK} and also just demonstrated for $\Phi$, the crucial step in obtaining decouplings for surfaces $\M$ with vanishing Gaussian curvature $K$ is to deal individually with the decoupling for each ``annulus" $\Ac_k \subset \M$ having essentially constant Gaussian curvature $ \pm 2^{-k}$. Consequently, in the next proposition, we restrict attention to a rectangular cap $\tQ$ of suitable dimensions that lies entirely within some $\Ac_k$. We will show that we may reduce the size of $\tQ$ via $\ell^2$ decoupling to dimensions small enough for rescaling, so long as $\M$ is just $C^4$.

It is noteworthy that Proposition \ref{pr2.0} addresses $\tQ$ whose $\xi_1$ principal directions are non-parallel with the tangent lines of $\Vc$. In other words, the lines of curvature are allowed to be transverse to the orientation of $\tQ$. As a reference point, in \cite{BDK}, the lines of curvature matched exactly with the level sets of $K$, which facilitated a much easier argument there.

\begin{pr} \label{pr2.0}

Let $\tQ \subset \Ac_k$ be a general principal expansion not containing planar points, and let $\tQ$ project onto a rectangle $Q$ whose width is $2^{-k}$ and whose length is $2^{-k/2}$. Alternatively, let $Q$ be a $\delta^{1/4} \times \delta^{1/2}$ rectangle over which the Gaussian curvature $K$ has magnitude less than $\delta^{1/2}$. 

If $K$ has positive sign throughout $\tQ$, we have the following $\ell^2(L^p)$ decoupling over flat caps $\t$: \begin{equation} \label{**2.8} \|P_{\Nc_\delta(\tQ)} f\|_p \lesssim_\epsilon \delta^{-\epsilon} (\sum_{\t \subset \tQ} \|P_{\Nc_\delta(\t)} f\|_p^2)^{1/2}. \end{equation} Otherwise, \eqref{**2.8} must be modified to an $\ell^p(L^p)$ decoupling: \begin{equation} \label{***2.8} \|P_{\Nc_\delta(\tQ)} f\|_p \lesssim_\epsilon |\{\t \subset \tQ\}|^{1/2-1/p + \epsilon} (\sum_{\t \subset \tQ} \|P_{\Nc_\delta(\t)} f\|_p^p)^{1/p}. \end{equation}

Moreover for each $\delta > 0$, when $2^{-k} \geq \delta^{1/2}$, each $\t$ is an intersection with $\tQ$ of a $2^{k/2} \delta^{1/2} \times \delta^{1/2}$ rectangular cap with sides parallel to the principal directions. When $|K|$ is less than $\delta^{1/2}$ however, the intersections are with $\delta^{1/4} \times \delta^{1/2}$ axis-parallel rectangular caps instead. 
\end{pr}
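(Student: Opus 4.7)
For case (b)---where $\tQ$ projects onto a $\delta^{1/4}\times\delta^{1/2}$ rectangle and $|K|\le\delta^{1/2}$ throughout---the analysis reduces to verifying that $\tQ$ is essentially a single cap of the claimed dimensions. Since $|\l_1|\le\delta^{1/2}$ and $\l_2\sim 1$ throughout, the quadratic height variation above a tangent plane aligned with the principal directions at the cap's center is $|\l_1|\delta^{1/2}+|\l_2|\delta\lesssim\delta$, and the mixed and higher-order remainders controlled by the $C^5$ norm of $\M$ can be absorbed either into the flatness allowance or by an initial finite dyadic partition that is harmless at the single-cap scale. The trivial $\ell^p(L^p)$ inequality \eqref{***2.8} then follows by the triangle inequality applied to this basic decoupling.

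Case (a) is the substantive part. The plan is to use the approximating surface $G(\xi_1,\xi_2)=A_{2,0}\xi_1^2+A_{0,2}\xi_2^2+A_{3,0}\xi_1^3$ drawn from the principal expansion \eqref{1.3}, with $|A_{2,0}|\sim 2^{-k}$ and $A_{0,2}\sim 1$. The discarded Taylor terms---$A_{2,1}\xi_1^2\xi_2$, $A_{1,2}\xi_1\xi_2^2$, $A_{0,3}\xi_2^3$, and the $O(|\xi|^4)$ remainder controlled by the $C^4$ norm---each contribute at most $O(2^{-2k})$ on $\tQ$, well below any scale to which we ultimately decouple. A direct application of the rescaling Lemma \ref{l2.1} is not possible since $B_1=2^{-k/2}$ exceeds $|A_{2,0}/A_{3,0}|\sim 2^{-k}$; instead I would first decouple $\Nc_\delta(\tQ)$ against $G$ by applying Lemma \ref{l2.2}, together with its subsequent remark on affine rescaling, to split $\tQ$ into intermediate caps of $\xi_1$-width at most $2^{-k}$ and $\xi_2$-width $\delta^{1/2}$. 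Each such intermediate cap then satisfies the hypothesis of Lemma \ref{l2.1} Case 2, and a terminal invocation of the rescaling lemma refines it into caps of the claimed dimensions $2^{k/2}\delta^{1/2}\times\delta^{1/2}$.

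Two reorientations need care. The caps produced by the decoupling of $G$ are aligned with $G$'s principal directions rather than those of $\M$; standard Fourier projection theory, via the $O(1)$-overlap argument outlined at the end of Section \ref{s4.3}, lets us re-express the decoupling in terms of rectangular caps parallel to $\M$'s principal directions. That the overlap is bounded requires the caps to be sufficiently thin and relies on $A_{2,1}\sim 1$ (Lemma A) to control the divergence between the principal directions of $\M$ and those of $G$. As well, the sign of $K$ over $\tQ$, which equals $\mathrm{sign}(A_{2,0})$ up to the fixed positive sign of $A_{0,2}$, dictates which branch of Lemma \ref{l2.2}'s output ($\Rc_k^+$ versus $\Rc_k^-$) is realized on $\tQ$.

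The main obstacle is preserving the $\ell^2(L^p)$ character of the scheme in the $K>0$ case rather than degenerating into $\ell^p(L^p)$. Lemma \ref{l2.2} naturally yields $\ell^2$ on its $\Rc_k^+$ branch and $\ell^p$ on its $\Rc_k^-$ branch, so I must exploit the sign identification above to argue that when $K>0$ throughout $\tQ$, only the $\Rc_k^+$ branch is realized after the affine rescaling normalizing $G$ to the form of $\Phi$; the intermediate decoupling is then uniformly $\ell^2(L^p)$, and the terminal rescaling lemma preserves this, yielding \eqref{**2.8}. When $K<0$, the analogous identification forces the $\Rc_k^-$ branch in the intermediate step, which combines with the terminal $\ell^p$ output of the rescaling lemma to give \eqref{***2.8} with the claimed cap count.
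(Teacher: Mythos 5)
Your plan for Case (a) has a genuine gap: a single application of Lemma \ref{l2.2} against $G=A_{2,0}\xi_1^2+A_{0,2}\xi_2^2+A_{3,0}\xi_1^3$ cannot take you down to $\xi_2$-width $\delta^{1/2}$. The approximation of $\psi$ by $G$ on $\tQ$ is only good to the size of the discarded terms, and your claim that these are all $O(2^{-2k})$ is false: with $A_{2,1}\sim 1$ and $|\xi_1|,|\xi_2|\lesssim 2^{-k/2}$, the term $A_{2,1}\xi_1^2\xi_2$ is only $O(2^{-3k/2})$, which is far larger than $\delta$ in general. So Lemma \ref{l2.2} applies only to the $O(2^{-3k/2})$-neighborhood of $\G_G$ and yields caps of $\xi_2$-length about $2^{-3k/4}$, not $\delta^{1/2}$. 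This is exactly why the paper runs an iterative (``type II'') scheme: the first pass shrinks $\xi_2$ to $2^{-3k/4}$, which improves the error $A_{2,1}\xi_1^2\xi_2$ to $O(2^{-7k/4})$, permitting a second pass to $\xi_2$-length $2^{-7k/8}$, and so on, with the target $2^{-k}$ (not $\delta^{1/2}$) reached only in the limit after $O(|\log\epsilon|)$ steps; a separate second stage then shrinks the $\xi_1$-length to $2^{-k}/A_{3,0}$ using the $O(2^{-k})\xi_1^2$ term as the neighborhood width, and only then is Lemma \ref{l2.1} invoked. Your one-shot intermediate decoupling skips the entire engine of the proof.

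Your sign bookkeeping is also misattributed. The $\Rc_k^{+}/\Rc_k^{-}$ dichotomy in Lemma \ref{l2.2} is governed by the sign of the curvature of the \emph{reference} surface, i.e.\ by the sign of $A_{3,0}\xi_1'$ measured from the chosen basepoint --- not by the sign of $K$ on $\M$ (note $A_{2,0}$ does not even appear in $\Phi$). The paper's key move (Figure \ref{f1}) is to \emph{choose the basepoint} of the principal expansion so that $Q$ lies entirely in the favorable region $\Ec$, making every iterative step $\ell^2(L^p)$ regardless of the sign of $K$; the sign of $K$ enters only once, in the terminal application of Lemma \ref{l2.1}. Under your reading, $K<0$ would force the $\ell^p$ branch at the intermediate step(s), and compounding $\ell^p$ losses across an iteration is precisely the failure mode the paper is engineered to avoid. (Your Case (b) sketch is broadly in the right spirit, though you should not wave away the cubic term $A_{3,0}\xi_1^3\sim\delta^{3/4}$; the paper's flatness discussion in Section \ref{s7} shows the caps must sometimes be shortened to $|A_{3,0}|^{-1/3}\delta^{1/3}\le\delta^{1/4}$.)
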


\begin{re} It is a straightforward application of the mean value theorem to see that a cap contained in $\Ac_k$, $2^{-k} \geq \delta^{1/2}$, whose points $(\xi_1, \xi_2)$ (in the tangent plane) satisfy bounds $$|\xi_1| \leq 2^{k/2}\delta^{1/2}, \qquad |\xi_2| \leq \delta^{1/2},$$ is indeed flat. Thus, in general, the intersections of $2^{k/2}\delta^{1/2} \times \delta^{1/2}$ boxes with $\Ac_k$ are the final caps that conclude a decoupling scheme for $\Ac_k$. 

In fact, for most values of $k$, $\Ac_k$ is essentially partitioned by $2^{k/2}\delta^{1/2} \times \delta^{1/2}$ boxes. For the mean value theorem assures that $K$ can change by $2^{-k}$ only if the change in the coordinates is at least $2^{-k}$. In turn, the diameter of the boxes satisfies $$2^{k/2}\delta^{1/2} \leq 2^{-k}$$ so long as $2^{-k} \geq \delta^{1/3}$. Thus, so long as $\Ac_k$ does not lie within the exceptional region where Gaussian curvature is less than $\delta^{1/3}$, we can be certain that $\Ac_k$ is partitioned typically.\end{re}

\begin{proof} 

The argument here occurs in three stages, the first two consisting of many iterations of $\ell^2$ decoupling. The goal is to reduce the dimensions of our caps to those mandated by the rescaling lemma (Lemma \ref{l2.1}), at which point either a final $\ell^2$ or $\ell^p$ decoupling is obtained depending on the sign of the Gaussian curvature. The rectangular caps comprising the final decoupling partition will have dimension bounds of $2^{k/2}\delta^{1/2} \times \delta^{1/2}$.

In the first stage, we only aim to decouple into caps $\th_{final, 1}$ whose $\xi_2$ length gets increasingly smaller, up to the terminal value $\sim 2^{-k}$.
We will pay somewhat loose attention to how the $\xi_1$ lengths also decrease during this process, only for the sake of notational convenience. If we only utilize continually that the $\xi_1$ lengths are bounded by $2^{-k/2}$, a suitable bound on the number of iterative steps required is still obtained.
In the second stage, however, if necessary, we aim to lower instead the $\xi_1$ lengths to at most $2^{-k}/A_{3,0}$ first; and then it easily follows that we may also reduce the $\xi_2$ size to at most $2^{-3k/2}/A_{3,0}$. This will be achieved again by repeating the iteration from the first stage for each cap. For the terminal caps $\theta_{final, 2}$ thus obtained, we finish the argument by applying Lemma \ref{l2.1} to each of them during the third stage, thereby finally decoupling into caps that are flat at scale $\delta$.

Finally, the following key insight makes it possible to achieve $\ell^2$ decoupling at each iterative step. Note that the diameter of $\tQ$ is $2^{-k/2}$. Therefore, the $A_{3,0}$ coefficient within the principal expansion \begin{equation} \label{**2.9} \psi(\xi_1, \xi_2) = A_{2,0} \xi_1^2 + A_{0,2} \xi_2^2 + A_{3,0} \xi_1^3 + A_{2,1} \xi_1^2 \xi_2 + A_{1,2} \xi_1 \xi_2^2 + A_{0,3} \xi_2^3 + O(|(\xi_1, \xi_2)|^4), \end{equation} either remains essentially constant or else is essentially less than $2^{-k/2}$ in magnitude for every basepoint $q \in \tQ$. This is true in light of the mean value theorem and the fact that $\tQ$ is at least $C^4$.

\bigskip

\textit{Stage I}. Let us begin the first stage of the argument. Let $\epsilon > 0$ be given. 
The idea of the proof is simply to decouple \eqref{**2.9} repeatedly by focusing on $$\tP(\xi_1, \xi_2) = A_{0,2}\xi_2^2 + A_{3,0}\xi_1^3,$$ or $$\tP(\xi_1, \xi_2) = A_{0,2}\xi_2^2$$ in the case of small $A_{3,0}$, as an approximating surface of $\tQ$ at small scales. Throughout, we aim to incur only loss in the decoupling constant of the form $C_\epsilon \delta^{-\epsilon}$.

We may and do assume $A_{0,2} = 1$ (via simple rescaling) in order to simplify the notation. In what follows, we seek the decoupling partition of $\Nc_\delta(\tQ)$. As a result, we should add $O(\delta)$ to \eqref{**2.9} in our analysis below, but we shall actually ignore this term throughout and use various powers of $2^{-k}$ instead for the neighborhood width. If at any point those powers turn out to be less than $\delta$, our argument will conclude prematurely with the original Bourgain-Demeter parabolic decoupling of $$\psi(\xi_1, \xi_2) = 2^{-k} \xi_1^2 + \xi_2^2 + O(\delta).$$ The caps thus obtained have the dimension bounds $2^{k/2}\delta^{1/2} \times \delta^{1/2}$ as prescribed. 

Lastly, let us clarify that no problem arises from the presence of additional bounded, positive factors in our neighborhood widths. This is due to the fact that the iteration will only occur $O(|\log \epsilon|)$ many times, as we shall see. Our neighborhood widths will be derived from the Taylor expansion \eqref{**2.9}, so it is important here as well that $\tQ$ is $C^4,$ in order to ensure satisfactory neighborhood widths below.\\

Let us commence. By hypothesis, each term in \eqref{**2.9} except for the second, third, and fourth terms is $O(2^{-2k})$. The fourth term is $O(2^{-3k/2})$. Thus, \eqref{**2.9} simplifies initially to \begin{equation} \label{*2.6}\psi(\xi_1, \xi_2) = \xi_2^2 + A_{3,0}\xi_1^3 + O(2^{-3k/2}),\end{equation} so we may decouple $\G_\psi$ using Lemma \ref{l2.2} for the $O(2^{-3k/2})$ neighborhood of $$\tP(\xi_1, \xi_2) = \xi_2^2+ A_{3,0} \xi_1^3.$$ Some care must be exercised in choosing the basepoint $q$ for \eqref{**2.9}, as we seek an $\ell^2$ decoupling of \eqref{*2.6}, and Lemma \ref{l2.2} only provides this over the region $\Ec$ where $A_{3,0} \xi_1 > 0$. A case analysis intervenes here, for which it is crucial that $\tQ$ is $C^4$. 

First, we may assume that $A_{3,0}$ satisfies \begin{equation} \label{**2.60} |A_{3,0}| \gtrsim 2^{-k/2}\end{equation} throughout $Q$, since otherwise $$|A_{3,0} \xi_1^3| \leq 2^{-2k} \qquad \text{for every $(\xi_1, \xi_2) \in Q$}$$ and \eqref{*2.6} then simplifies further to \begin{equation} \label{**2.6} \psi(\xi_1, \xi_2) = \xi_2^2 + O(2^{-3k/2}). \end{equation} In this scenario, we may use $$\tP(\xi_1, \xi_2) = \xi_2^2$$ instead as our reference surface throughout the iterative decoupling procedure explained below, running parabolic cylindrical decoupling in $\ell^2(L^p)$ at each step.  

Given \eqref{**2.60}, we next consider the case in which the principal directions do not make an $O(2^{-k/2})$ angle with the orientation of $Q$. This situation is favorable because then the dimensions of $Q$ prevent its orientation from changing significantly with respect to the principal directions at points within $Q$. Thus, as illustrated in Figure \ref{f1}, we are able to choose the basepoint of \eqref{**2.9} such that $Q$ lies entirely within a region $\Ec$. If instead the $\xi_2$ axis makes an $O(2^{-k/2})$ angle with the short side of $Q$ having length $2^{-k}$, we may cover all of $Q$ by a region $\Ec$, except for a small triangle $\Tc$ having base length $2^{-3k/2}$ and height $2^{-k}$. No issue is posed here, as $\Tc$ has the dimensions required by Lemma \ref{l2.1} for rescaling, perhaps after two decouplings with respect to the cylinder $$(\xi_1, \xi_2) \mapsto \xi_2^2 + A_{0,3}\xi_2^3.$$ 

In all cases, our above decoupling is in $\ell^2(L^p)$. It yields axis-parallel rectangles $\Rc$ (not necessarily parallel to $Q$ but rather parallel to the principal directions) with $\xi_2$ dimension $2^{-3k/4}$ partitioning $Q$, perhaps after another additional basic decoupling. We label the caps in $\tQ$ lying above an $\Rc$ as $\th_1$. Let us note too in preparation for the next step in the iteration that the principal directions likely change for each $\th_1$. This detail was noted in \cite{LY}. However, the dimension bounds on $Q$ cause the size of the $\xi_2$ length to be preserved when taken according to the new principal direction. 

\begin{figure}
\begin{center}

\begin{tikzpicture}


\draw[gray, thick] (-5, 2) -- (-6, 4) -- (-2, 6) -- (-1, 4)-- cycle;

\node at (-1.7, 6) {$Q$};

\draw[gray, thick] (4, 2) -- (3, 4) -- (7, 6) -- (8, 4)-- cycle;

\node at (7.3, 6) {$Q$};

\draw[gray, thin, ->] (-3.5, 7.5) to [out = 60, in = 120] (5, 7.5);


\draw[black, thick, <->] (-5.5, 0.5) -- (-3.5, 6.5);
\draw[black, thick, <->] (-6.5,2.5) -- (-2, 1);

\draw[black, thick, <->] (2.5, 2) -- (3.5, 6);
\draw[black, thick, <->] (1, 4.5) -- (7, 3);


\draw[black, thick] (-5+ 1, 2 + 0.5) -- (-5 + 1, 2 + 3);
\draw[black, thick] (-5 + 2, 2 + 1) -- (-5+2, 2+3 + 0.5);
\draw[black, thick] (-5 + 3, 2 + 1.5) -- (-5+3, 2+3 + 1);

\draw[black, thick] (3.2, 3.6) -- (3.2, 4.1);
\draw[black, thick] (4, 2) -- (4, 2 + 2.5);
\draw[black, thick] (4+ 1, 2 + 0.5) -- (4 + 1, 2 + 3);
\draw[black, thick] (4 + 2, 2 + 1) -- (4+2, 2+3 + 0.5);
\draw[black, thick] (4 + 3, 2 + 1.5) -- (4+3, 2+3 + 1);


\draw[gray, thin] (-5.3, 2.6) to [out = 50, in = 130] (-4.8, 2.6);

\draw[gray, dashed] (3,4) -- (2, 6);
\draw[gray, thin] (2.7, 4.6) to [out = 50, in = 130] (3.2, 4.6);

\node at (-2.5,2) {$\Ec$};
\draw[gray, thick, ->] (-2.5, 2.2) -- (-3.4, 3.2);
\node at (5, 1.3) {$\Ec$};
\draw[gray, thick, ->] (5, 1.5) -- (4.7, 2.7);

\node at (-6.7, 5) {$\gtrsim 2^{-k/2}$};
\draw[gray, thick, ->] (-6.7, 4.8) -- (-5, 2.8);

\node at (2.6, 7) {$\gtrsim 2^{-k/2}$};
\draw[gray, thick, ->] (2.6, 6.8) -- (2.9, 4.8);

\node at (-6, 0) {principal axes};
\node at (2, 1.5) {principal axes};

\end{tikzpicture}

\caption{\textbf{Choosing a different basepoint over $Q \subset T_q(\M)$ enables $\tQ \subset \Ec$.}}
\label{f1}
\end{center}
\end{figure}
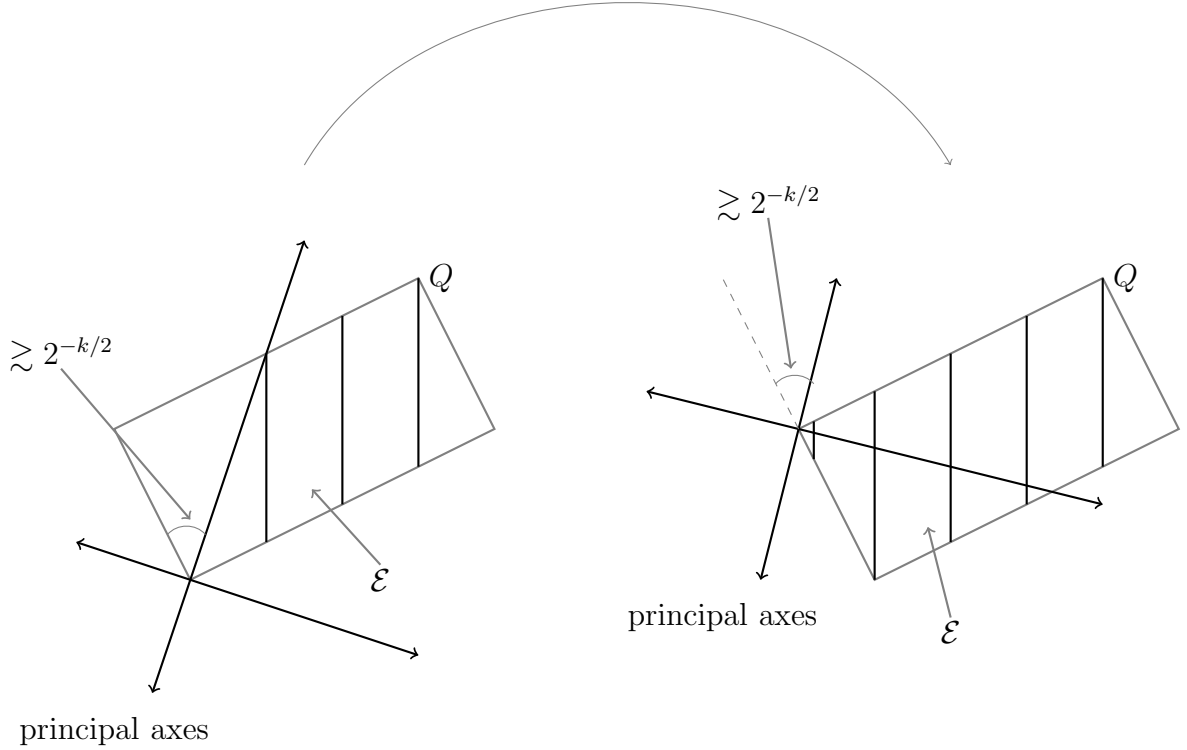

At the next step, we may keep $2^{-k/2}$ as the upper bound on the $\xi_1$ lengths, although this length may have been reduced. Then, for each $\th_1$, the term $A_{2,1} \xi_1^2\xi_2$ in \eqref{**2.9} is $O(2^{-7k/4})$, while all of the other terms in \eqref{**2.9} not used in the definition of $\tP$ are $O(2^{-2k})$. In what follows, the term $A_{2,1} \xi_1^2\xi_2$ will continue to provide the neighborhood width for our iterative decouplings. 

We have begun to launch an iterative decoupling scheme of the form introduced in Section \ref{s3}. The first iterative step gave us above the caps $\th_1 \subset \Nc_{\delta_1}(\G_{\tP})$ having their $\xi_2$ lengths bounded by $2^{-3k/4}$ with $$\delta_1 = O(2^{-7k/4}).$$ The corresponding decoupling inequality is $$\|f\|_p \leq C_\epsilon \delta_0^{-\epsilon} (\sum_{\Rc^{(1)} \cap Q \ne \emptyset} \|P_{\Nc_{\delta_1}(\th_1)}f\|_p^2)^{1/2},$$ where $\delta_0 = O(2^{-3k/2})$. At the second iterative step, for each $\th_1$, we have new $\xi_1$ and $\xi_2$ axes, corresponding to the principal directions at an appropriate point in $\th_1$, and we may again approximate the corresponding principal expansion $\psi$ for $\th_1$ by the reference surface $\G_{\tP}$. Using either Lemma \ref{l2.2} (with rescaling) or parabolic cylindrical decoupling if $A_{3,0}$ is small, we acquire caps $\th_2$ partitioning $\th_1$: \begin{equation} \label{*2.7}\|P_{\Nc_{\delta_1}(\th_1)}f\|_p \lesssim  C_\epsilon \delta_1^{-\epsilon}(\sum_{\Rc^{(2)} \cap \Rc^{(1)} \ne \emptyset} (\|P_{\Nc_{\delta_2}(\th_2)}f\|_p^2)^{1/2},\end{equation} where the $\th_2 \subset \tQ$ project onto rectangles $\Rc^{(2)}$ having their $\xi_2$ dimensions as $\delta_1^{1/2}$ and their $\xi_1$ dimensions varying as $2^{l/2}\delta_1^{1/2}$ with $\delta_1^{1/3} \leq 2^{-l} \leq 2^{-k/2}.$ Note that in general $\Rc^{(2)}$ is non-parallel with $\Rc^{(1)}$. Yet, again because of the diameter bound on $\tQ$, we may re-tile each $\Rc^{(1)}$ by new rectangles $\Rc^{(2)}$ now oriented parallel to $\Rc^{(1)}$, still maintaining \eqref{*2.7} . This change is permitted by Fourier projection since there are $O(1)$ overlaps between the old and new $\Rc^{(2)}$, in light of the diameter bound. It comes at the expense of an additional harmless constant factor of 25 in \eqref{*2.7} and is depicted in Figure \ref{f2}. 

For each $\th_2$, we take axes parallel to the principal directions at a suitable point in $\th_2$. The $\xi_2$ length of $\th_2$ (taken according to the new principal direction) satisfies $$ |\xi_2| \leq \delta_1^{1/2},$$ again guaranteed by the fact that $\th_1$ projects within a $2^{-k/2} \times 2^{-k/2}$ cube, and so it holds that $$\th_2 \subset \Nc_{\delta_2}(\G_{\tP})$$ 

\begin{figure}
\begin{center}

\begin{tikzpicture}


\draw[black, thick, ->]
(-10,2) -- (-10, 4);

\draw[black, thick, ->]
(-10,2) -- (-8,2);

\node at (-8.5, 1) {Principal axes at $q$};


\draw[gray, thick] (-8,4) -- (-10, 7) -- (-4, 11)--(-2, 8)--cycle;

\draw[black, thin, ->] (-2.5, 10) to [out = 40, in = 140] (2.5,10);

\draw[gray, thick] (1,4) -- (-1, 7) -- (5, 11)--(7, 8)--cycle;


\fill (-10,7) circle[radius=2pt];
\node at (-10.2, 7.2) {$q$};

\foreach \t in {0.05, 0.35, 0.65}
{
\draw[gray, thin] ({-8 + \t*6}, {4 + \t*4}) -- ({-8+\t*6}, {4+\t*4+4.3});
}

\draw[gray, thin] ({-10}, {7}) -- ({-10 +6.5}, {7});
\draw[gray, thin] ({-9}, {7.67}) -- ({-2.5}, {7.67});

\foreach \t in {0.05, 0.35, 0.65}
{
\draw[gray, thin] ({1 + \t*6}, {4 + \t*4}) -- ({1+\t*6}, {4+\t*4+4.3});
}

\draw[gray, thin] ({-1}, {7}) -- ({-1 +6.5}, {7});
\draw[gray, thin] ({0}, {7.67}) -- ({6.5}, {7.67});


\foreach \t in {-0.75, 0, 0.75, 1.5, 2.25}
{
\draw[black, thick] ({-7.5+\t}, {6.8 +\t*0.1}) -- ({-7.5-0.1 + \t}, {6.8 + 1 + \t*0.1});
}

\foreach \t in {0, 0.5, 1}
{
\draw[black, thick] ({-8.25 - \t*0.1}, {6.8 -0.75*0.1+\t}) -- ({-5.25 - \t*0.1}, {6.8 + 2.25*0.1 + \t});
}

\draw[black, dashed] ({-5.25 - 0.5*0.1 + 0.1}, {6.8 + 2.25*0.1 +0.5}) -- ({-5.25 - 0.5*0.1 + 0.1}, {10});
 
 \draw[black, dashed] ({-7.5-0.1 + 2.25}, {6.8 +1+2.25*0.1}) -- ({-7.5-3*0.1 + 2.25}, {6.8 + 1*3 + 2.25*0.1});
 
 \draw[gray, thin] ({-7.5 - 2*0.1 + 2.25}, {6.8 + 2 + 2.25*0.1}) to [out = 45, in = 135] ({-7.5 - 2*0.1 + 2.25 +0.25}, {6.8 + 2 + 2.25*0.1});
 
 \draw[gray, thin] ({-0.55}, {7.3}) -- ({5.9}, {7.3});
 \foreach \t in {0.25, 1, 1.75}
 {
 \draw[black, thick] ({-9.7 + \t + 10.9}, {7}) -- ({-9.7 + \t + 10.9}, {7.67});
 }
 
 \node at (-7, 10) {$O(2^{-k/2})$};
 \draw[black, thin, ->] (-7, 9.8) -- (-5.28, 8.6);
 
 \node at (-3, 5.5) {A typical $\Rc^{(i-1)}$};
 \draw[black, thin, ->] (-4, 5.7) -- (-4.7, 7.2);
 
 \node at (-4, 4) {Original $\Rc^{(i)}$};
 \draw[black, thin, ->] (-5, 4.2) -- (-6.5, 7.2);
 
 \node at (6,5.5) {New $\Rc^{(i)}$};
 \draw[black, thin, ->] (5.5, 5.7) -- (2.5, 7.1);
 
\end{tikzpicture}

\caption{\textbf{The reorienting of the $\Rc^{(i)} \subset T_q(\M)$ via Fourier projection.}}
\label{f2}
\end{center}
\end{figure}
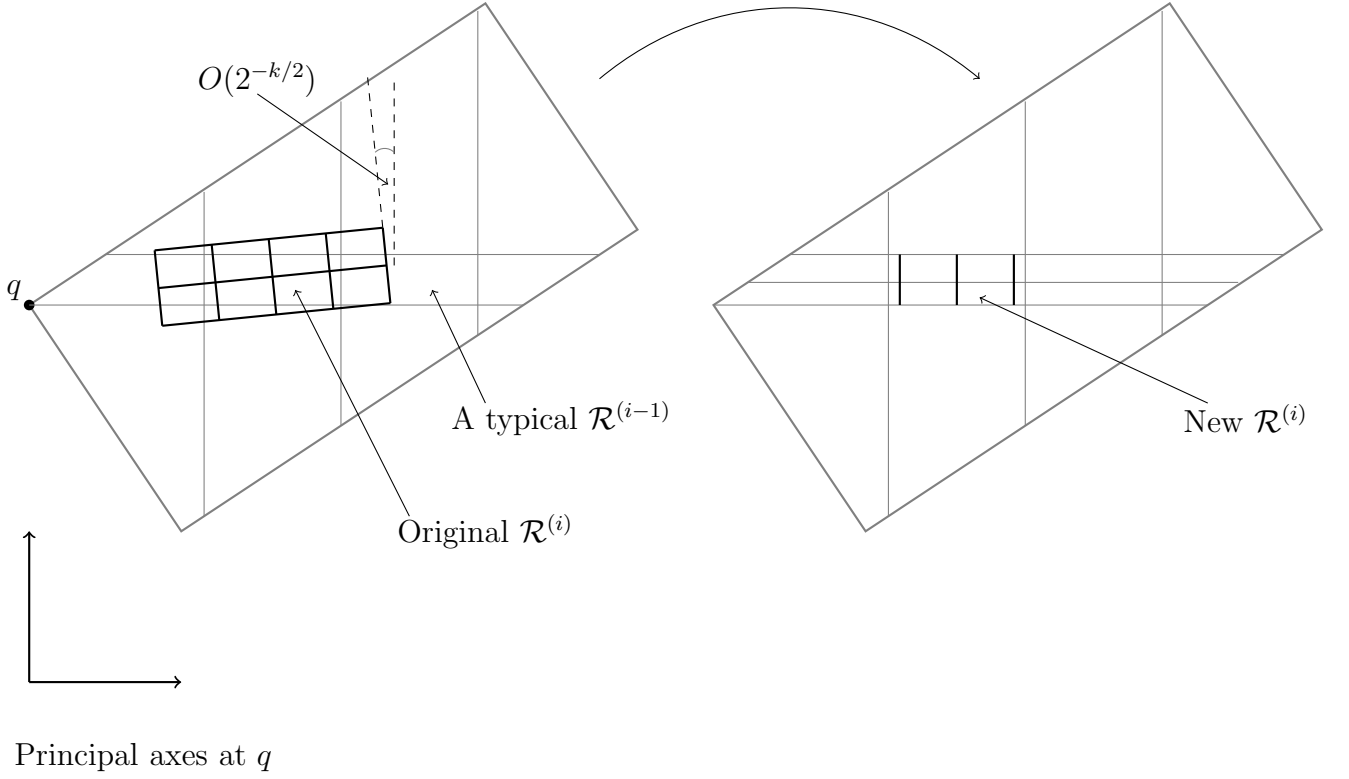

\ni with $$\delta_2 = O(2^{-k}\delta_1^{1/2})$$ by previous exposition. As the next iterative step, we partition $\Nc_{\delta_2}(\th_2)$ over caps $\th_3 \subset \Nc_{\delta_3}(\G_{\tP})$ projecting onto axis-parallel rectangles $\Rc^{(3)}$: $$\|P_{\Nc_{\delta_2}(\th_2)}f\|_p \lesssim  C_\epsilon \delta_2^{-\epsilon}(\sum_{\Rc^{(3)} \cap \th_2 \ne \emptyset} \|P_{\Nc_{\delta_3}(\th_3)}f\|_p^2)^{1/2} $$ where $$\delta_3 = O(2^{-k}\delta_2^{1/2}).$$ Each $\Rc^{(3)}$ has dimension bounds $$|\xi_1| \leq 2^{-k/2}, \qquad |\xi_2| \leq \delta_2^{1/2},$$ and may be modified via Fourier projection so as to run parallel to the initial rectangles $\Rc^{(1)}$. 

We continue this process until we obtain $$|\xi_2| \leq 2^{-k}$$ for each terminal cap. Let us confirm that the number of iterations required for that accomplishment is $O(|\log \epsilon|)$ and that the final decoupling constant has the form $C_\epsilon'\delta^{-O(\epsilon)}$. By induction, we can determine the neighborhood width $\delta_i$ for each iterative step $i$. Following the road map just illustrated, given a cap $\th_{i-1} \subset \Nc_{\delta_{i-1}}(\G_{\tP})$ having dimensions $$|\xi_1| \leq 2^{-k/2}, \qquad |\xi_2| \leq \delta_{i-2}^{1/2},$$ step $i$ will partition it into caps $\th_i \subset  \Nc_{\delta_i}(\G_{\tP})$ of dimensions $$|\xi_1| \leq 2^{-k/2}, \qquad |\xi_2| \leq \delta_{i-1}^{1/2}.$$ Since $A_{2,1}\xi_1^2 \xi_2$ is the largest among the error terms in \eqref{**2.9}, we take $\delta_i = 2^{-k}\delta_{i-1}^{1/2}.$ This procedure occurs for all $i$. Therefore, recalling $\delta_1 = 2^{-7k/4}$, it follows by induction that \begin{equation} \label{2.6**} \delta_i = 2^{-k\sum_{j=0}^{i+1} 2^{-j}}. \end{equation} Thus, the $\xi_2$ lengths of the caps $\th_i$ are each \begin{equation} \label{2.8} \delta_{i-1}^{1/2} = 2^{-k\sum_{j=1}^{i+2} 2^{-j}}.\end{equation}  As $i \rightarrow \infty$, the exponent in \eqref{2.8} goes to $-k$. Specifically, let $N \in \Nb$ be the first such that $$\sum_{j=1}^N 2^{-j} > 1 - \epsilon. $$ Then, $N = O(|\log \epsilon|)$, and at the $(N-2)$-th iteration, we will have caps with $\xi_2$ lengths at most $2^{-k}2^{k\epsilon}$. Basic decoupling then reduces this $\xi_2$ length further to the desired value $2^{-k}$ at the expense of the allowable constant $2^{k\epsilon/2} \leq \delta^{-\epsilon/2} $. 

Concerning the final decoupling constant, we know that each step $i$ introduces the additional decoupling constant $$C C_\epsilon(\delta_{i-1})^{-\epsilon} \leq C C_\epsilon 2^{2k\epsilon} \leq C C_\epsilon \delta^{-2\epsilon}.$$ Of course, these factors are accumulating and thus the full decoupling constant attached to a $\|P_{\th_i}f\|_p$ at step $i$ may be taken as \begin{equation} \label{2.7} (CC_\epsilon)^i (\delta^{-2\epsilon})^i. \end{equation} Since only $O(|\log \epsilon|)$ iterative steps occur at this stage in the proof, their product terminates here as $$(CC_\epsilon)^{O(|\log \epsilon|)}\delta^{-O(\epsilon |\log \epsilon|)},$$ a permissible constant for decoupling (since $\epsilon \log \epsilon \rightarrow 0$ as $\epsilon \rightarrow 0$). 

We have obtained final bounds on the lengths of caps $\th_{final, 1}$: \begin{equation} \label{2.8*} |\xi_1| \leq 2^{-k/2}, \qquad |\xi_2| \leq 2^{-k}, \end{equation} partitioning $\tQ$. Since the width of $Q$ is $2^{-k}$, we know that each $\th_{final, 1}$ is approximated by an axis-parallel rectangle of the same dimensions as $\th_{final, 1}$, a detail which ensures that we get the expected $\ell^p$ decoupling constant in Stage III.

\bigskip

\textit{Stage II}. This stage is only necessary in the event that $A_{3,0} \sim 2^{-l}$ such that $$2^{-l} \gtrsim 2^{-k/2}.$$ Otherwise, we move directly to Stage III.

Assuming then that $A_{3,0}$ is large, we now shift focus to reducing the size of the $\xi_1$ component to the bound expressed in the second case of Lemma \ref{l2.1}. Step I certainly brought progress because the local graph parametrizations \eqref{**2.9} now appear as \begin{equation} \label{2.9} \psi(\xi_1, \xi_2) = \xi_2^2 + A_{3,0} \xi_1^3 + O(2^{-k})\xi_1^2 + O(2^{-2k})\xi_1 + O(2^{-3k}).\end{equation}  As a reminder, our target dimensions are \begin{equation} \label{2.10} |\xi_1| \leq \frac{2^{-k}}{A_{3,0}}, \qquad |\xi_2| \leq \frac{2^{-3k/2}}{A_{3,0}}.\end{equation} If it ever holds for a cap $\th$ that $|\xi_1| \leq 2^{-k}$, then the $\xi_1$ length of $\th$ is sufficiently small, since $2^{-k} \lesssim \frac{2^{-k}}{A_{3,0}}.$ And once the $\xi_1$ length meets inequality \eqref{2.10}, just one more decoupling will give us the required bound on $|\xi_2|$. 

Thus, \eqref{2.9} simplifies to \begin{equation} \label{2.11} \psi(\xi_1, \xi_2) = \xi_2^2 + A_{3,0} \xi_1^3 + O(2^{-k})\xi_1^2 + O(2^{-3k}).\end{equation} We are ready to launch another iterative decoupling scheme. In this scenario, we are solely considered with reducing the size of the $\xi_1$ component, and we shall not keep track of how the $\xi_2$ length decreases. All that is needed is to ensure that at most $O(|\log \epsilon|)$ additional iterative decouplings need to be applied. At each iteration, we will again be decoupling with respect to $\G_{\tP}$ as the reference surface. As before, the dimensions of each cap $\th$ obtained will be preserved when we transfer to coordinates respecting the principal basis at $\th$, in light of the dimension bounds \eqref{2.8*}. The $O(2^{-k})\xi_1^2$ term in \eqref{2.11} will be contributing the neighborhood widths, in light of $$2^{-k}\xi_1^2 \leq O(2^{-3k})\qquad \Rightarrow \qquad |\xi_1| \lesssim 2^{-k}.$$

When executing step $i$ of the iterative decoupling procedure, the principal expansions appear as \begin{equation} \label{II}\psi(\xi_1, \xi_2) = \xi_2^2 + A_{3,0} \xi_1^3 + O(2^{-k}(\frac{2^{-k}}{A_{3,0}})^{ 2(1/3)\sum_{j=0}^{i-2} (2/3)^j}) \end{equation} with the dimensions being

 $$ |\xi_1| \leq (\frac{2^{-k}}{A_{3,0}})^{(1/3)\sum_{j=0}^{i-2}(2/3)^j}, \qquad |\xi_2| \leq 2^{-k}.$$Since $$\frac{1}{3} \sum_{j \geq 0} (\frac{2}{3})^j = 1,$$ the $\xi_1$ length goes to $2^{-k}/A_{3,0}$ as hoped. By an argument entirely similar to the one above, we see that again only $O(|\log \epsilon|)$ many iterative steps ultimately need to be executed. Thus, we conclude with $| \xi_1| \leq \frac{2^{-k}}{A_{3,0}}$ throughout each of the final caps at the expense of an allowable decoupling constant. 

We are now essentially done, because every term in \eqref{**2.9} except $\xi_2^2$ is $O(\frac{2^{-3k}}{A_{3,0}^2}),$ including the fourth-order Taylor error. Decoupling once with respect to the graph of $$(\xi_1, \xi_2) \mapsto \xi_2^2$$ gives us an $\ell^2$ decoupling of $\tQ$ into caps $\th_{final, 2}$. 

\bigskip

\textit{Stage III}. The above arguments provided three ways that we might arrive at caps with dimensions meeting the criteria of Lemma \ref{l2.1}. If $K < 0$ over $\tQ$, Lemma \ref{l2.1} can only provide an $\ell^p$ decoupling, so we must first utilize H\"older's inequality in order to lift our current $\ell^2$ decoupling to an $\ell^p$ decoupling. Then, we  apply that lemma to each cap to complete the proof.

\end{proof}

Proposition \ref{pr2.0} should be viewed as a sizable step towards our goal. For it resolves the decoupling theory of any surface for which the Gaussian curvature vanishes to first order as a function of distance from its vanishing set when this set is curved. An example is provided in the first case of the following proof. 

\begin{pr} \label{pr2.1} 
Let $A, B, C,$ and $D$ be $O(1)$ real constants with $|C| \sim 1$. For each $\delta > 0$, there exists a decoupling partition $\Pc_\delta$ with maximally flat, rectangular elements $\t$ of the graph $\G_\Psi$ of \begin{equation} \label{*2.12} \Psi(\xi_1, \xi_2) = A\xi_1^2 + \xi_2^2 + B\xi_1^3+ C\xi_1^2\xi_2 + D\xi_1^4, \qquad |\xi_1|, |\xi_2| \leq 1.\end{equation} Each $\t$ has the dimensions prescribed in Proposition \ref{pr2.0}. Moreover, the decoupling obtained here is a combined $\ell^2(L^p)$ and $\ell^p(L^p)$ inequality in the form of \eqref{***}.
\end{pr}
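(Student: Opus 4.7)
The plan is to derive Proposition \ref{pr2.1} as a direct corollary of Proposition \ref{pr2.0} via a standard dyadic decomposition based on the magnitude of the Gaussian curvature.

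First, I would explicitly compute $K$ for $\G_\Psi$. From the graph formula \eqref{*1}, the numerator is
\[
 \Psi_{11}\Psi_{22} - \Psi_{12}^2 = 2\bigl(2A + 6B\xi_1 + 2C\xi_2 + 12D\xi_1^2\bigr) - 4C^2\xi_1^2,
\]
which, since $|C|\sim 1$, is a smooth function of $\xi_1,\xi_2$ that vanishes along the graph $\xi_2 = g(\xi_1)$ of an $O(1)$ quadratic polynomial $g$. The denominator is bounded above and below by $O(1)$ positive constants on $[-1,1]^2$, so $K$ is comparable to the signed $\xi_2$-distance to the vanishing set $\Vc = \{\xi_2 = g(\xi_1)\}$. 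In particular, the dyadic ``annulus'' $\Ac_k := \{(\xi_1,\xi_2)\in[-1,1]^2 : |K|\sim 2^{-k}\}$ is a strip of width $\sim 2^{-k}$ around $\Vc$, and the exceptional region $\Ac_* := \{|K|\leq \delta^{1/2}\}$ is a strip of width $\sim \delta^{1/2}$. The sign of $K$ is constant on each connected component of $\Ac_k \setminus \Vc$.

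Next I would apply basic decoupling to write
\[
 \|f\|_p \lesssim \|P_{\Nc_\delta(\widetilde{\Ac_*})}f\|_p + \sum_{\delta^{1/2}\leq 2^{-k}\leq 1}\bigl(\|P_{\Nc_\delta(\widetilde{\Ac_k^+})}f\|_p + \|P_{\Nc_\delta(\widetilde{\Ac_k^-})}f\|_p\bigr),
\]
which costs only $O(|\log\delta|)$, absorbed into $C_\epsilon\delta^{-\epsilon}$. Then I tile each $\Ac_k$ by boundedly overlapping rectangles $Q_{k,j}$ of dimensions $2^{-k}\times 2^{-k/2}$ (short side transverse to $\Vc$, long side along $\Vc$), and $\Ac_*$ by rectangles $Q_{*,j}$ of dimensions $\delta^{1/2}\times\delta^{1/4}$. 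At a chosen basepoint $q_{k,j}\in \tilde{Q}_{k,j}$, Taylor expanding $\Psi$ in the principal basis at $q_{k,j}$ produces a principal expansion \eqref{1.3} whose projection is a rectangle of the prescribed shape. Since $K(q_{k,j})\sim 2^{-k}$, the coefficient $A_{2,0}=\l_1(q_{k,j})$ satisfies $|A_{2,0}|\lesssim 2^{-k}$; and since $|C|\sim 1$ translates via Lemma A (applied to the basepoints on $\Vc$) and a continuity argument to $A_{2,1}\sim 1$, each $\tilde{Q}_{k,j}$ meets the hypotheses of Proposition \ref{pr2.0}. A further basic decoupling step reorients each $\tilde{Q}_{k,j}$ to be a principal-expansion cap, costing only an $O(1)$ factor.

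Now I apply Proposition \ref{pr2.0} to each $\tilde{Q}_{k,j}$ and $\tilde{Q}_{*,j}$. For rectangles in $\M^+$ (including those in $\Ac_*$ where the $\ell^2$ form of Proposition \ref{pr2.0} applies), the output is an $\ell^2(L^p)$ decoupling into caps of dimensions $2^{k/2}\delta^{1/2}\times\delta^{1/2}$ (respectively $\delta^{1/4}\times\delta^{1/2}$), contributing to the first term of \eqref{***}. For rectangles in $\M^-\cap \Ac_k$, Proposition \ref{pr2.0} yields an $\ell^p(L^p)$ decoupling with the prefactor $|\{\t\subset\tilde{Q}_{k,j}\}|^{1/2-1/p+\epsilon}$, which combines with the finite number ($\sim 2^{k/2}$) of rectangles per $\Ac_k^-$ via Hölder's inequality to produce exactly the $|\{\t_k\}|^{1/2-1/p}$ factor in the second term of \eqref{***}. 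Summing in $k$ and reassembling yields the claimed inequality.

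The main obstacle is the bookkeeping in the last step: making sure the $\ell^2$ aggregation over $\M^+$ rectangles across different $k$'s is clean (achieved by noting that the final caps over $\M^+$ all fit into a single $\ell^2$ sum), and that the $\ell^p$ assembly within each $\Ac_k^-$ yields exactly the $|\{\t_k\}|^{1/2-1/p}$ factor rather than a larger global $\ell^p$ loss. A subsidiary technical point is verifying that rectangles $Q_{k,j}$ can be chosen to lie on one side of $\Vc$ (so that $K$ has a definite sign throughout), which follows because $\Vc$ is a smooth graph transverse to the short side of each $Q_{k,j}$ and the basepoints can be chosen a constant distance into $\Ac_k^\pm$.
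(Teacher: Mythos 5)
Your overall architecture (dyadic decomposition in $|K|$, reduce to $2^{-k}\times 2^{-k/2}$ rectangular caps, then invoke Proposition \ref{pr2.0}) matches the paper's, and your Case-I-type reasoning is essentially the paper's first case. But there is a genuine gap at the step where you ``tile each $\Ac_k$ by boundedly overlapping rectangles $Q_{k,j}$ of dimensions $2^{-k}\times 2^{-k/2}$.'' The annulus $\Ac_k$ has length $O(1)$, so this tiling has $\sim 2^{k/2}$ pieces; it is not a basic ($O(1)$-cardinality) decoupling, and you never supply the decoupling inequality that separates $\Nc_\delta(\widetilde{\Ac_k})$ into these lengthwise pieces with only $C_\epsilon\delta^{-\epsilon}$ loss. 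The paper obtains it from $\ell^2$ decoupling for the vanishing parabola $\Cc:\ \xi_2=C^{-1}((C^2-6D)\xi_1^2-3B\xi_1-A)$, extended cylindrically in the vertical direction: the $2^{-k}$-neighborhood of $\Cc$ decouples into arcs of $\xi_1$-length $|C^2-6D|^{-1/2}2^{-k/2}$. This only yields the desired $2^{-k/2}$ length when $|G|=|C^2-6D|\gtrsim C^2$, i.e.\ when $\Cc$ is genuinely curved.

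When $G$ is small or zero --- which is permitted by the hypotheses, e.g.\ $D=C^2/6$ makes $\Cc$ a straight line --- your tiling step fails: cutting a straight (or nearly straight) strip into $\sim 2^{k/2}$ lengthwise pieces in $\ell^2$ is exactly what Proposition \ref{p0.1} forbids without a $2^{k(1/2-1/p)/2}$ loss. This is where the bulk of the paper's proof lives (its Case II): it observes that $|G|<(1/13)C^2$ forces $D\sim 1$, builds from the Taylor re-expansion of $\Psi$ at shifted basepoints a family of approximating surfaces $\tilde\Psi$ whose own vanishing parabolas $\tilde{\Cc}$ \emph{are} curved (curvature $\sim \frac{7}{5}C^2-16D$), verifies that the cap in question lies on the $\ell^2$-favorable side of $\tilde{\Cc}$ (shifting the basepoint horizontally when necessary for $\Bc_-$), and runs a type II iterative scheme driving the $\xi_1$-length from $2^{-k/4}$ down to $2^{-k/2}$ in the limit. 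Without some substitute for this argument, your proof covers only the subfamily of $(A,B,C,D)$ with $|C^2-6D|\gtrsim C^2$ and does not establish the proposition as stated.
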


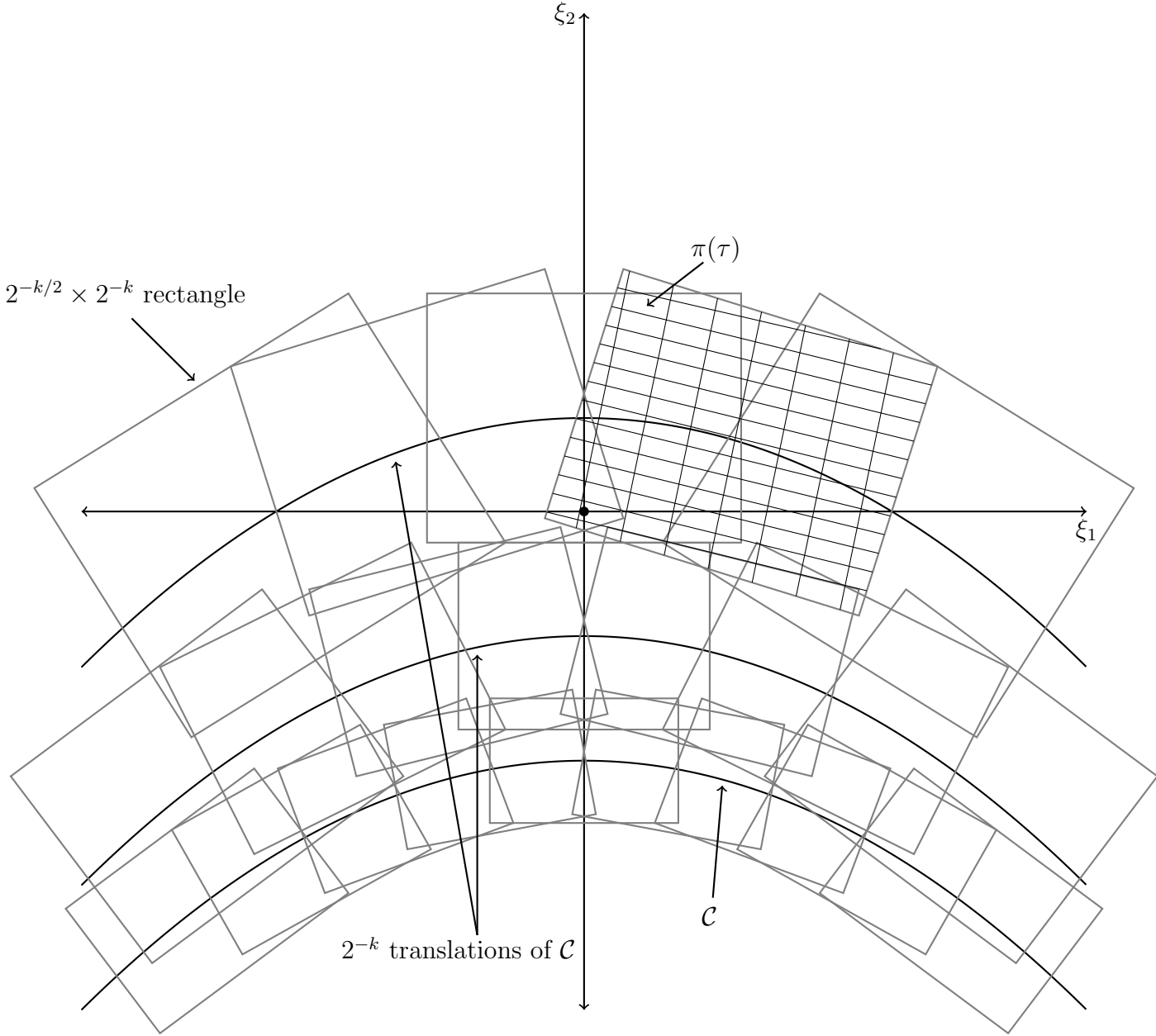
\begin{figure}
\begin{center}

\begin{tikzpicture}


\draw[black, thick, ->]
(0,0) -- (8,0);

\node at (8, -0.3) {$\xi_1$};

\draw[black, thick, <-]
(-8,0) -- (0,0);

\draw[black, thick, <->]
(0, -8) -- (0,8); 

\node at (-0.3, 8) {$\xi_2$};

\draw (0,0) circle[radius=2pt];
\fill (0,0) circle[radius=2pt];
\draw[black, thick]
({-8}, {-64/16 - 4})
\foreach \t in {-8, -7.99, ..., 8}
{
--({\t}, {-\t*\t/16 - 4})
};

\draw[black, thick]
({-8}, {-64/16 - 4+2})
\foreach \t in {-8, -7.99, ..., 8}
{
--({\t}, {-\t*\t/16 - 4+2})
};

\draw[black, thick]
({-8}, {-2.5})
\foreach \t in {-8, -7.99, ..., 8}
{
--({\t}, {-\t*\t/16 +1.5})
};

\foreach \t in {-6, -4.5, -3, -1.5, 0, 1.5, 3, 4.5, 6}
{
\draw[gray, thick] (\t-1.5-\t/8, -\t*\t/16-4+ 1.5*\t/8-1) -- (\t-1.5+\t/8, -\t*\t/16-4 + 1.5*\t/8 + 1) -- (\t+1.5+\t/8, -\t*\t/16- 4 - 1.5*\t/8 + 1) -- (\t+3/2-\t/8, -\t*\t/16 - 4-1.5*\t/8 - 1) -- cycle;
}

\foreach \t in {-6, -4, -2, 0, 2, 4, 6}
{
\draw[gray, thick] (\t-2-1.5*\t/8, -\t*\t/16-4+ 2*\t/8-1.5+2) -- (\t-2+1.5*\t/8, -\t*\t/16-4 + 2*\t/8 + 1.5 +2) -- (\t+2+1.5*\t/8, -\t*\t/16- 4 - 2*\t/8 + 1.5 +2) -- (\t+2-1.5*\t/8, -\t*\t/16 - 4-2*\t/8 - 1.5 +2) -- cycle;
}

\foreach \t in {-5, -2.5, 0, 2.5, 5}
{
\draw[gray, thick] (\t-2.5-2*\t/8, -\t*\t/16-4+ 2.5*\t/8-2+5.5) -- (\t-2.5+2*\t/8, -\t*\t/16-4 + 2.5*\t/8 + 2 +5.5) -- (\t+2.5+2*\t/8, -\t*\t/16- 4 - 2.5*\t/8 + 2 +5.5) -- (\t+2.5-2*\t/8, -\t*\t/16 - 4-2.5*\t/8 - 2 +5.5) -- cycle;
}

\foreach \t in {-2, -1.3, ..., 2.5}
{
\draw[black, thin] ({2.5 + \t -2*2.5/8}, {-2.5*2.5/16-4 - 2.5*\t/8-2+5.5}) -- ({2.5+(\t-0.4) +2*2.5/8}, {-2.5*2.5/16-4 - 2.5*(\t-0.4)/8 + 2 +5.5});
}

\foreach \t in {-1.9, -1.6, ..., 1.6}
{
\draw[black, thin] ({2.5 -2.5 +\t*2.5/8}, {-2.5*2.5/16-4 + 2.5*2.5/8+\t+5.5}) -- ({2.5+2.5 +(\t+0.3)*2.5/8}, {-2.5*2.5/16-4 - 2.5*2.5/8 + (\t +0.3) +5.5});
}

\draw[black, thin] ({2.5 -2.5 +1.7*2.5/8}, {-2.5*2.5/16-4 + 2.5*2.5/8+1.7+5.5}) -- ({2.5+(2-0.4) +2*2.5/8}, {-2.5*2.5/16-4 - 2.5*(2-0.4)/8 + 2 +5.5});


\node at (2.1, 4.2) {$\pi(\t)$};
\draw[thick, ->] (1.9, 4) -- (1, 3.3);

\node at (2, -6.5) {$\Cc$};
\draw[thick, ->] (2.05, -6.2) -- (2.2, -4.4);

\node at (-2, -7) {$2^{-k}$ translations of $\Cc$};
\draw[thick, ->] (-1.7, -6.8) -- (-1.7, -2.3);
\draw[thick, ->] (-1.7, -6.8) -- (-3, 0.8);

\node at (-7.3, 3.5) {$2^{-k/2} \times 2^{-k}$ rectangle};
\draw[thick, ->] (-7.2, 3.1) -- (-6.2, 2.1);

\end{tikzpicture}

\caption{\textbf{The orthogonal projection $\pi$ onto the $(\xi_1, \xi_2)$ plane of the caps $\t \in \Pc_\delta(\G_\Psi)$ contained within one $2^{-k/2} \times 2^{-k}$ rectangular cap inside of $\Ac_k$.}}
\end{center}
\end{figure}

\begin{proof}

Let us commence the proof. We know that $\G_\Psi$ is non-planar since its Hessian matrix never vanishes. Planarity of a graph $\G_\Xi$ means that the Hessian matrix of $\Xi$ vanishes at a point, and the chain rule would then imply that the second-order derivatives of any function composition involving $\Xi$ vanish too. Thus, quick computation shows that the Hessian matrices of all principal expansions of $\G_\Psi$ are nonzero. By \eqref{*1}, the Gaussian curvature is computed to be

\begin{equation} \label{2.9*} K \sim 2(2A+6B\xi_1 + 2C\xi_2 + 12D\xi_1^2) - (2C\xi_1)^2,\end{equation} demonstrating in particular that $K$ vanishes on a connected set $\Cc$. Note that the principal curvatures of $\G_\Psi$ are continuous functions of $\xi$. We conclude that only one principal curvature $\l_1$ vanishes at points in $\Cc$ and therefore is essentially the Gaussian curvature throughout the domain of $\Psi$.

According to \eqref{2.9*}, the level sets of $K$ at dyadic powers $\pm 2^{-k}$ are vertical translations by $\pm \frac{2^{-k}}{C}$ of the parabola $\Cc$ \begin{equation} \label{2.9**} \xi_2 = C^{-1}((C^2 - 6D)\xi_1^2  - 3B\xi_1-A).\end{equation} 
Note that $|K| \sim 2^{-k}$ within the $2^{-k}$-neighborhoods $\Nc_k$ of the above translations of $\Cc$. As well, if $C > 0$, the region above $\Cc$ is where $K > 0$ and where $\ell^2$ decoupling will be confirmed. Otherwise, it is the region below $\Cc$.

We proceed by dividing the matter into two cases. Let $G = C^2 - 6D$. \\

\textit{Case I}\\

If $|G| \geq (1/13)C^2$, then $\ell^2$ decoupling with respect to the parabola $\Cc$ holds for $\Nc_k$, and it may be extended to the vertical cylinder in $\R^3$ above it. The caps thus given have $\xi_1$ lengths $2^{-k/2}$, and this implies that their $\xi_2$ length is $O(2^{-k/2})$ too, in consideration of \eqref{2.9**}. Proposition \ref{pr2.0} thus obtains the flat decoupling partition for each $\G_{\Psi|_{\Nc_k}}$. The full decoupling inequality for $\|f\|_p$ is then acquired via the triangle inequality, as at the beginning of the proof of Lemma \ref{l2.2}. The flat caps thus obtained have diameter at most $\delta^{1/4}$.\\

\textit{Case II}\\

Now we address the situation where \begin{equation} \label{*2.10} |G| < (1/13)C^2,\end{equation} which only occurs if $D > 0$. 

In the current context, the smallness of $G$ prevents an immediate minute dissection of $\Ac_k$. Nevertheless, decoupling still enables us to partition $\Nc_k$ as almost rectangular pieces of width $2^{-k}$ for all values of $G$, not just $G = 0$ (in which case the $\Nc_k$ are exactly rectangles). This is because Theorem \ref{t0.1} dissects $\Nc_k$ as $2^{-k}$ vertical neighborhoods of caps projecting onto the $\xi_1$ axis as intervals of length $G^{-1/2}2^{-k/2}$. These caps are approximated by rectangles arranged along the tangent directions for the parabola $\Cc$.

Thus, via this $\ell^2$ decoupling if $G \ne 0$, we are able to restrict attention to a rectangle $\Bc$ of width $2^{-k}$ that lies essentially within $\Nc_k$. Alternatively, it will be more convenient in the exposition below to handle $\Bc$ as an honest $2^{-k}$ vertical neighborhood of a cap: $$ \Bc_+ = \{(a+ \xi_1, C^{-1}(G(a+\xi_1)^2 - 3B(a+\xi_1) - A) + v): \xi_1 \in [0, G^{-1/2}2^{-k/2}], $$ \begin{equation} \label{*2.101} v \in [C^{-1}2^{-k}, C^{-1}2^{-k+1}] \} \end{equation} or $$ \Bc_- = \{(a+ \xi_1, C^{-1}(G(a+ \xi_1)^2 - 3B(a + \xi_1) - A) - v): \xi_1 \in [0, G^{-1/2}2^{-k/2}], $$ \begin{equation} \label{*2.102} v \in [C^{-1}2^{-k}, C^{-1}2^{-k+1}] \}. \end{equation}Note that \eqref{*2.10} implies that $D \sim 1$. This insight will play a key role in our iterative decoupling scheme below. For each $\Bc = \Bc_{\pm}$, we have the following iterative argument for reducing the length of $\Bc$ to the desired bound of $2^{-k/2}$ via iterative $\ell^2$ decoupling.

To set the stage, we make two more helpful observations. First, we note that the form of $\Psi$ is almost preserved by affine equivalence when taking its Taylor expansion at any point $(a, b) \in \R^2$: $$\Psi(\xi_1, \xi_2) = A(\xi_1 - a + a)^2+ (\xi_2-b + b)^2 +  B(\xi_1 - a + a)^3 + C(\xi_1 - a + a)^2(\xi_2-b +b) + D(\xi_1 - a + a)^4 $$  $$ = A'(\xi_1 - a)^2 + (\xi_2 - b)^2 + B'(\xi_1-a)^3 + C(\xi_1  - a)^2(\xi_2 - b) + D(\xi_1-a)^4 +$$ \begin{equation} \label{*2.11}E(\xi_1 - a)(\xi_2 - b) + F_1(\xi_1 - a) + F_2(\xi_2 - b) + F_3 \end{equation} where \begin{equation} \label{*2.111} A' = A + 3aB + bC + 6a^2D, \end{equation} \begin{equation} \label{*2.112} B' = B + 4aD, \end{equation} and \begin{equation} \label{*2.113} E = 2aC. \end{equation} Affine equivalence allows us to simplify \eqref{*2.11} to \begin{equation} \label{*2.12} \Psi(\xi_1', \xi_2') = A' \xi_1'^2+ \xi_2'^2 + B'\xi_1'^3 + C\xi_1'^2\xi_2' + D\xi_1'^4 + E\xi_1'\xi_2', \end{equation} where $\xi_1' = \xi_1 - a$ and $\xi_2' = \xi_2 - b$.

Our other insight occurs by inserting the Gaussian curvature into \eqref{*2.12}. In terms of the $\xi_1', \xi_2'$ coordinates, the Gaussian curvature $K'$ of $\G_\Psi$ satisfies \begin{equation} \label{*2.13} K' \sim 2(2A'+6B'\xi_1' + 2C\xi_2' + 12D\xi_1'^2) - (2C\xi_1' + E)^2. \end{equation} Recall that $K|_{\G_{\Psi|_{\Nc_k}}} \sim 2^{-k}$. Then we also have that $K' \sim 2^{-k}$ since the Hessian of $\Psi$ is preserved under the change in coordinates $(\xi_1, \xi_2) \rightarrow (\xi_1', \xi_2').$ Solving \eqref{*2.13} for $\xi_2'$ thus yields \begin{equation} \label{*2.14} \xi_2' = C^{-1}(O(2^{-k}) + (C^2-6D)\xi_1'^2 + (CE- 3B')\xi_1' - A' + \frac{E^2}{4}). \end{equation} Now, we seek to substitute \eqref{*2.14} into \eqref{*2.12}. For this purpose, we borrow some of the $C\xi_1'^2 \xi_2'$ term in \eqref{*2.12}, for example:  $$\Psi(\xi_1', \xi_2') = \xi_2'^2 + A'\xi_1'^2 + B'\xi_1'^3 + \frac{9}{10}C\xi_1'^2\xi_2' + D\xi_1'^4 + E\xi_1'\xi_2' + \frac{1}{10}C\xi_1'^2\xi_2'$$ yielding $$ \Psi(\xi_1', \xi_2') = \xi_2'^2 + A'\xi_1'^2 + B'\xi_1'^3 + \frac{9}{10}C\xi_1'^2 \xi_2'+ D\xi_1'^4 +E\xi_1'\xi_2'+$$ $$ \frac{1}{10}C\xi_1'^2(\frac{1}{C}(O(2^{-k})+ (C^2-6D)\xi_1'^2 + (CE- 3B')\xi_1'  - A' + \frac{E^2}{4})),$$ which simplifies to \begin{equation} \label{*2.15} \Psi(\xi_1', \xi_2') = \xi_2'^2 + A''\xi_1'^2 + B''\xi_1'^3 + \frac{9}{10}C\xi_1'^2 \xi_2'+ (\frac{1}{10}C^2 + \frac{2}{5}D)\xi_1'^4 + E\xi_1'\xi_2' + O(2^{-k})\xi_1'^2, \end{equation} where \begin{equation} \label{*2.151} A'' = \frac{1}{10}(\frac{E^2}{4} + 9A'), \end{equation} and \begin{equation} \label{*2.152} B'' = \frac{1}{10}(CE + 7B'). \end{equation}

The error in \eqref{*2.15} serves well as neighborhood widths for a type II decoupling scheme that achieves our end goal, so long as \begin{equation} \label{*2.153} \tilde{\Psi}(\xi_1', \xi_2') = \xi_2'^2 + A''\xi_1'^2 + B''\xi_1'^3 + \frac{9}{10}C\xi_1'^2 \xi_2'+ (\frac{1}{10}C^2 + \frac{2}{5}D)\xi_1'^4 + E\xi_1'\xi_2' \end{equation} fits case I. The reader may be concerned by the presence of the $E\xi_1'\xi_2'$ term in \eqref{*2.153}. However, a quick check shows that the only impact of this term is to cause vertical translation of the parabola $\tilde{\Cc}$ (the set where the Gaussian curvature of the graph of $\tPs$ vanishes). In particular, it does not affect the curvature of $\tilde{\Cc}$.

And $\tilde{\Cc}$ does indeed have nonzero curvature, as we now deduce. In light of \eqref{*2.10}, \begin{equation} \label{*2.16} D > (2/13) C^2.\end{equation} In turn, we compute that the Gaussian curvature of the graph of $\tPs$ vanishes at points $(\xi_1', \xi_2')$ satisfying \begin{equation} \label{6.3.2.1} \xi_2' = \frac{5}{18C}(\frac{3}{5}(\frac{7}{5}C^2 - 16D)\xi_1'^2 + (\frac{18}{5}CE - 12B'')\xi_1' + E^2 - 4A'', \end{equation} which is a curved parabola $\tilde{\Cc}$ by \eqref{*2.16}. 

Now, we specialize $b$ to when $(a,b)$ is the vertex of $\Bc_\pm$ expressed as $$(a, C^{-1}(Ga^2 - 3Ba - A \pm 2^{-k})).$$ This simplifies \eqref{6.3.2.1} to \begin{equation} \label{6.3.2.2} \xi_2' = \frac{5}{18C}(\frac{3}{5}(\frac{7}{5}C^2 - 16D)\xi_1'^2 + \frac{6}{5}(4aC^2 - 7B - 28D)\xi_1' \mp \frac{18}{5}2^{-k}). \end{equation} Our goal is to ensure that $\Bc_\pm$ lies within the region over which we know that the graph of $\tilde{\Psi}$ has an $\ell^2$ decoupling. Thus, we must ensure that $\Bc_\pm$ lies above the parabola $\tilde{\Cc}$. 

Without loss of generality, we may assume that $C > 0$, and then it holds that $C^{-1}(\frac{7}{5}C^2 - 16D) < 0$. Thus, $\tilde{\Cc}$ opens downward, a convenient detail that motivates why an $\ell^2$ decoupling should be possible. Observe that if we take $\Bc = \Bc_+$, so that the minus sign is taken in \eqref{6.3.2.2}, then $\Bc$ evidently lies above $\tilde{\Cc}$ by that same expression. (Recall that $(a,b)$ is equal to the origin in $\xi_1', \xi_2'$ coordinates.) Consequently, the scenario is reduced to the first case considered above, where the argument there yields an $\ell^2$ decoupling of $\delta$-neighborhoods of $\G_{\tilde{\Psi}}$ over caps of dimensions at most $\delta^{1/4} \times \delta^{1/2}$ for arbitrary $\delta > 0$. We may thus run an iterative decoupling scheme, at each iterative step using $\tilde{\Psi}$ as an approximating surface for a cap $\th \subset \G_\Psi$, fixing the supremum over $\th$ of $O(2^{-k})\xi_1'^2$ from \eqref{*2.15} to be our error $\delta$, and applying the $\ell^2$ decoupling. At step $i$, the $\xi_1'$ length will be reduced to $$|\xi_1'| \leq (2^{-k})^{(1/4)\sum_{j=0}^{i-1} (1/2)^j}.$$ This bound converges to $2^{-k/2}$ as $i \rightarrow \infty$, so we ultimately achieve $$|\xi_1'| \leq 2^{-k/2}, |\xi_2'| \leq 2^{-k}.$$ And this accomplishment comes at the expense of an allowable decoupling constant, in light of the constant given by the $\ell^2$ decoupling at each iterative step and the number of steps required. Proposition \ref{pr2.0} then completes the decoupling of $\|P_{\Nc_\delta(\tilde{\Bc}_+)} f\|_p$.

Finally, we address $\Bc = \Bc_-$. Now it holds that the plus sign in \eqref{6.3.2.2} must be taken, and we further assume that the original parabola $\Cc$, associated to $\Psi$, is downward pointing, the case of $\Cc$ opening upward following more easily from the next paragraph. We investigate briefly how we might arrange an $\ell^2$ decoupling using some $\tPs$.

For $\tPs$ based directly on $\Bc$, $\tilde{\Cc}$ is in fact above a portion of $\Bc$. However, we may consider whether shifting horizontally our choice of $\tPs$ supplies a $\tilde{\Cc}$ that does extend entirely below $\Bc$. The type II decoupling scheme just described never reduces $\xi_1$ lengths to a value smaller than $2^{-k/2}$. Thus, we examine whether $\tilde{\Cc}$ decreases faster by a difference of $2^{-k}$ than does $\Cc$ over $\xi_1$ intervals of length $d \geq 2^{-k/2}$. There is good reason to believe such a statement as $\tilde{\Cc}$ is significantly more curved than $\Cc$. Working in $\xi_1', \xi_2'$ coordinates and quoting \eqref{6.3.2.2}, the relevant desired inequality appears as $$ C^{-1}[(C^2 - 6D)(\ta \pm d)^2 - 3B(\ta \pm d) - (C^2 - 6D)\ta^2 + 3B\ta] \geq \frac{5}{18C}[\frac{3}{5}(\frac{7}{5}C^2 - 16D)d^2$$ $$ + \frac{6}{5}(4\ta C^2 - 7B - 28\ta D)(\pm d)] + C^{-1}2^{-k}, $$ which is indeed true if \begin{equation} \label{2.19} \pm ( \ta + M_{B,C,D}) \geq |(\frac{23}{30}C^2 - \frac{10}{3}D)d - d^{-1}2^{-k}||-\frac{2}{3}C^2 + \frac{8}{3}D|^{-1} \end{equation} for some fixed $M_{B,C,D} \in \R.$ The value $-(2/3)C^2 + 8/3D$ is bounded away from zero by a constant dependent upon $C$. Thus, \eqref{2.19} informs us that within our iterative decoupling scheme, we may shift $a$ by an $O(1)$ enlargement of the length of $\Bc$ and then decouple in $\ell^2(L^p)$. Applying basic decoupling afterwards in each step, we thereby obtain the smaller length desired there with acceptable $O(1)$ loss. As well, at each iterative step, the sign choice required in \eqref{2.19} for its validity dictates whether we shift our choice of $\tPs$ to the left or right of $\Bc$. Figure \ref{f4} illustrates the above argument.

One last word concerning the aesthetics of the new type II iterative decoupling scheme. Concerning the shapes of the decoupling caps obtained throughout, these essentially are always parallel sub-rectangles of increasingly smaller length and width equal to that of $\Bc$. This is due to the fact that the above iterative decouplings involving $\tPs$ never decrease any length to a value smaller than $2^{-k}$, the width of $\Bc$. Therefore, it is only the length of $\Bc$ that is being reduced at each step, and we are permitted to replace each new cap obtained with a true subrectangle of $\Bc$ parallel to it by standard Fourier projection theory.

\end{proof}

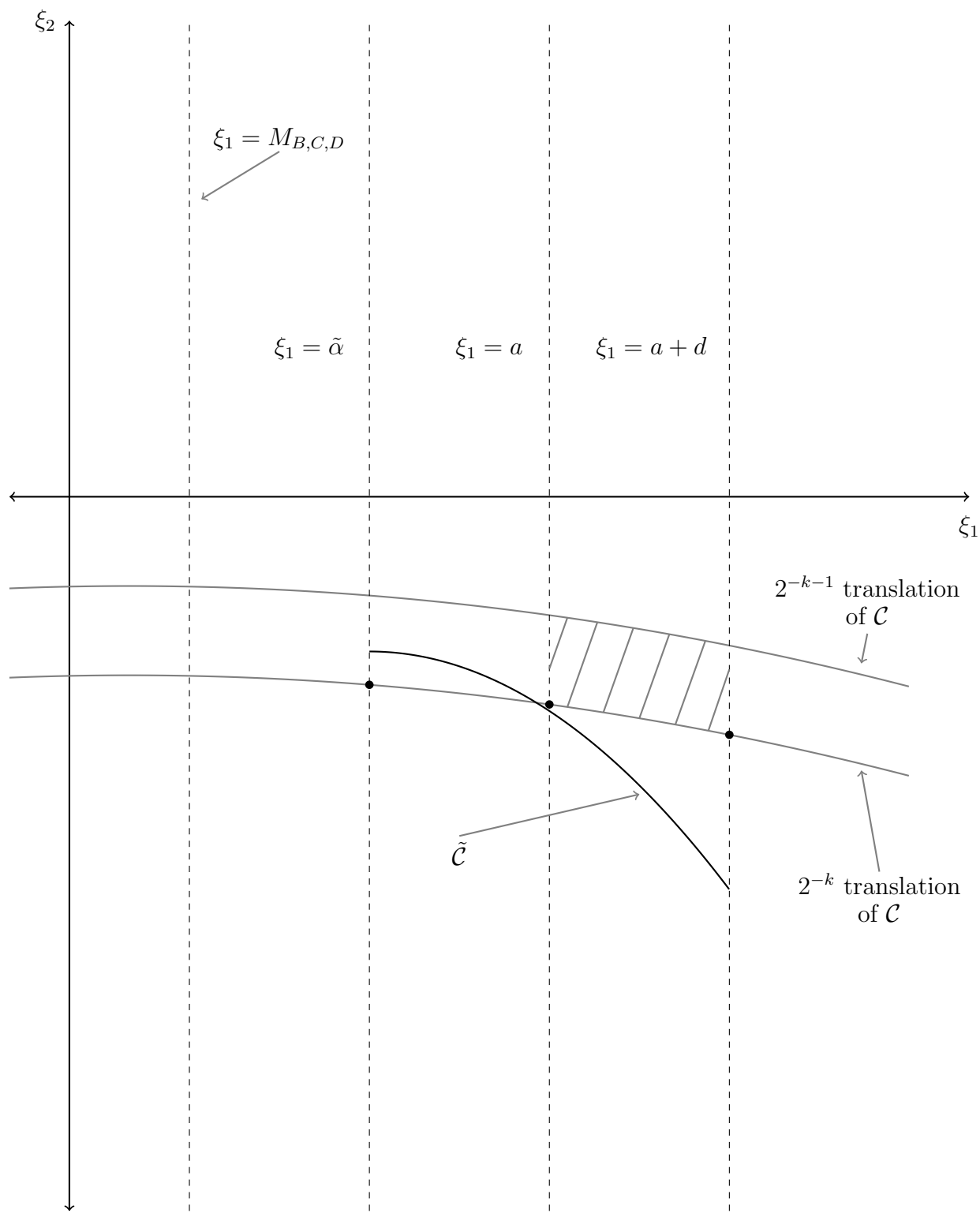
\begin{figure}
\begin{center}

\begin{tikzpicture}


\draw[black, thick, <->]
(-8,2) -- (8, 2);

\node at (8, 1.5) {$\xi_1$};
\draw[black, thick, <->]
(-7,-10) -- (-7,10);

\node at (-7.4, 10) {$\xi_2$};


\draw[gray, thick]
(-8, {-1/25 - 1})
\foreach \t in {-8, -7.99, ..., 7}
{
--({\t}, {-(\t+6)*(\t+6)/100 - 1})
};

\draw[gray, thick]
(-8, {-1/25+ 0.5})
\foreach \t in {-8, -7.99, ..., 7}
{
--({\t}, {-(\t+6)*(\t+6)/100 + 0.5 })
};
\fill (-2,{-16/100 - 1}) circle[radius=2pt];
\draw[black, dashed] (-2, -10) -- (-2, 10);
\node at (-3, 4.5) {$\xi_1 = \ta$};

\fill (1, {-49/100 - 1}) circle[radius=2pt];
\draw[black, dashed] (1, -10) -- (1, 10);
\node at (0, 4.5) {$\xi_1 = a$};

 \fill (4, {-1 - 1}) circle[radius=2pt];
\draw[black, dashed] (4, -10) -- (4, 10);
\node at (2.7, 4.5) {$\xi_1 = a + d$};

\draw[gray, thick] (1.3, {-7.3*7.3/100 + 0.5}) -- (1, {-49/100 - 0.4});
\draw[gray, thick] (1.8, {-7.8*7.8/100 + 0.5}) -- (1.3, {-7.3*7.3/100 - 1});
\draw[gray, thick] (2.4, {-8.4*8.4/100 + 0.5}) -- (1.9, {-7.9*7.9/100 - 1});
\draw[gray, thick] (3, {-9*9/100 + 0.5}) -- (2.5, {-8.5*8.5/100 - 1});
\draw[gray, thick] (3.6, {-9.6*9.6/100 + 0.5}) -- (3.1, {-9.1*9.1/100 - 1});
\draw[gray, thick] (4, {-1 + 0.1}) -- (3.65, {-9.65*9.65/100 - 1});

\draw[black, thick]
(-2, {- 0.6})
\foreach \t in {-2, -1.99, ..., 4}
{
--({\t}, {-(\t +2)*(\t +2)/9 - 0.6})
};

\draw[black, dashed] (-5, -10) -- (-5, 10);
\node at (-3.5, 8) {$\xi_1 = M_{B,C,D}$};
\draw[gray, thick, ->] (-3.5, 7.8) -- (-4.8, 7);

\node at (6.5, -4.5) {$2^{-k}$ translation};
\node at (6.5, -5) {of $\Cc$};
\draw[gray, thick, ->] (6.5, -4.3) -- (6.2, -2.6);

\node at (6.3, 0.5) {$2^{-k-1}$ translation};
\node at (6.3, 0) {of $\Cc$};
\draw[gray, thick, ->] (6.3, -0.3) -- (6.2, -0.8);

\node at (-0.5, -4) {$\tilde{\Cc}$};
\draw[gray, thick, ->] (-0.5, -3.7) -- (2.5, -3);

\end{tikzpicture}

\caption{\textbf{Shifting the basepoint to the left by $d$ in deriving $\tPs$ ensures that $\tilde{\Cc}$ lies below the shaded region.}}
\label{f4}
\end{center}
\end{figure}

\section{Proof of Theorem \ref{t0.2}} \label{s7}

Equipped with the decoupling theory of our $\delta$-approximating surface $\G_\Psi$ for $U= U_i$, we are now ready to prove Theorem \ref{t0.2}. Following the previous page, let us define $\Dec(U, \delta)$ to be the smallest positive constant $C$ satisfying the following: there exists a decoupling partition $\Pc_{\delta}$ of $U$ comprised of flat caps $\t$, each fully contained in some $\Ac_k$, such that \eqref{***} holds instead with constant $C$ for all $f$ Fourier supported in $\Nc_\delta(U)$. The width of each $\t$ is of course at most $\delta^{1/2}$. It is evident too that the length is bounded by $\delta^{1/4}$ since $$2^{k/2}\delta^{1/2} \leq \delta^{1/4}$$ for all $2^{-k} \geq \delta^{1/2}$, and $$A_{2,1} \sim 1$$ implies that some choice of $p = (\pm \delta^{1/4}, \pm \delta^{1/2})$ satisfies $$\psi(p) \sim_{A_{2,1}} \delta$$ if $A_{3,0} \lesssim_{A_{2,1}} \delta^{1/4}.$ The alternative case of $A_{3,0}$ being larger than $\delta^{1/4}$ implies that flatness is reached at scale $$|\xi_1| \leq |A_{3,0}|^{-1/3}\delta^{1/3} \leq \delta^{1/4}.$$

The proof now goes through as before. The inequality \begin{equation} \label{3.3} \|f\|_p \leq \Dec(U, \delta^{4/5}) ((\sum_{\th} \|P_{\Nc_\delta(\th)} f\|_p^2)^{1/2} + \sum_k |\{\th_k\}|^{1/2-1/p}(\sum_{\th_k} \|P_{\Nc_\delta(\th_k)}f\|^p)^{1/p})  \end{equation} is readily given to us. Now, concerning each $\th$, it has dimensions at most $\delta^{1/5} \times \delta^{2/5}$ with respect to a principal basis taken within $\th$. Therefore, $\th$ is the graph of the function \begin{equation} \label{3.4} \psi(\xi_1, \xi_2) = A_{2,0} \xi_1^2 + A_{0,2} \xi_2^2 + A_{3,0} \xi_1^3 + A_{2,1} \xi_1^2 \xi_2 + A_{4,0} \xi_1^4 + O(\delta), \end{equation} where $A_{2,1} \sim 1$ by the hypothesis of the theorem. Proposition \ref{pr2.1} thus partitions $\th$ and $\th_k$ into flat caps $\t$ and $\t_k$ that have dimensions equal to those expressed in Proposition \ref{pr2.0}, in particular at the expense of a decoupling constant $C_\epsilon \delta^{-\epsilon}|\Pc_\delta(\th_k)|^{1/2-1/p}$ for the $\th_k$ terms in \eqref{3.3}. The constant $C_\epsilon$ necessarily depends on the $C^5$ norm of $\psi$ also. By the definition of $\Dec(U, \delta)$, we conclude the following inequality, which may be iterated: \begin{equation} \label{3.2} \Dec(U, \delta) \lesssim_{\epsilon, \M} \delta^{-\epsilon}\Dec(U, \delta^{4/5}). \end{equation} 

\section{Appendix}

The role of this appendix is to answer some questions relating to the geometry of $\M$ near $\Vc$. First, we shall derive a sufficiently explicit description of $\M$ near $\Vc$. Then, we shall see that $\M$ is not convex in this region and why this fact prohibits a full $\ell^2$ decoupling over maximally flat caps there.

\subsection{Characterization of $\M$ near $\Vc$} \label{a1}

This subsection uncovers some relevant ties linking the Euclidean description of $\M$ to that provided by its lines of curvature. Let us introduce notation that builds upon the exposition of Section \ref{s2}. Recalling the curvature parametrization, let $$C = \|\x_u(0,0)\|, \qquad C' = \|\x_v(0,0)\|,$$ (both nonzero necessarily), and define the projection map $\pi : \R^3 \rightarrow \R^2, \pi = (\xi_1, \xi_2),$ with respect to the principal directions spanned respectively by $\{\e_1, \e_2\}$ of $T_p(\M)$. The Jacobian of $\pi \circ \x(u,v)$ is nonsingular at $(0,0)$ and can be written in general as \begin{equation} \label{a.1} \begin{pmatrix} \frac{\partial \xi_1}{\partial u} & \frac{\partial \xi_1}{\partial v} \\ \\ \frac{\partial \xi_2}{\partial u} & \frac{\partial \xi_2}{\partial v} \end{pmatrix},\end{equation} where $\frac{\partial \xi_1}{\partial u} = \e_1 \cdot \x_u, \frac{\partial \xi_1}{\partial v} = \e_1 \cdot \x_v, \frac{\partial \xi_2}{\partial u} = \e_2 \cdot \x_u$, and $\frac{\partial \xi_2}{\partial v} = \e_2 \cdot \x_v$.

From the usual formula for the inverse of $2 \times 2$ matrices, we have the Jacobian of $(\pi \circ \x)^{-1}$ as \begin{equation} \label{a.2} \begin{pmatrix} \frac{\partial u}{\partial \xi_1} & \frac{\partial u}{\partial \xi_2} \\ \\ \frac{\partial v}{\partial \xi_1} & \frac{\partial v}{\partial \xi_2} \end{pmatrix} = \begin{pmatrix} \frac{\partial \xi_1}{\partial u} & \frac{\partial \xi_1}{\partial v} \\ \\ \frac{\partial \xi_2}{\partial u} & \frac{\partial \xi_2}{\partial v} \end{pmatrix}^{-1}  = J^{-1} \begin{pmatrix} \frac{\partial \xi_2}{\partial v} & -\frac{\partial \xi_1}{\partial v} \\ \\ -\frac{\partial \xi_2}{\partial u} & \frac{\partial \xi_1}{\partial u} \end{pmatrix}, \end{equation} where $J = (\frac{\partial \xi_1}{\partial u} \frac{\partial \xi_2}{\partial v} - \frac{\partial \xi_1}{\partial v}\frac{\partial \xi_2}{\partial u})$.

Evaluating \eqref{a.1} at $0 = (0,0)$, we get $$ \frac{\partial \xi_1}{\partial u}(0) = C, \quad \frac{\partial \xi_2}{\partial u}(0)= 0 = \frac{\partial \xi_1}{\partial v}(0), \quad \frac{\partial \xi_2}{\partial v}(0) = C', $$ which of course implies by \eqref{a.2} that \begin{equation} \label{a.3}\frac{\partial u}{\partial \xi_1}(0) = C^{-1}, \frac{\partial v}{\partial \xi_2}(0) = C'^{-1}, \frac{\partial u}{\partial \xi_2}(0) = 0 = \frac{\partial v}{\partial \xi_1}(0).\end{equation} As well, from \eqref{a.2}, we may compute \begin{equation} \label{a.4} \frac{\partial^2 v}{\partial \xi_1^2}(0) = -(J^{-1}\frac{\partial u}{\partial \xi_1}  \frac{\partial^2 \xi_2}{\partial u^2})(0) = -C^{-2}C' \frac{\partial^2 \xi_2}{\partial u^2}(0), \end{equation} which we shall use shortly. We are now ready to prove the following lemma.

\begin{lem*}  \label{A1} Assume that $\l_1(\bq) = 0$, $(\x(0,0) = \bq)$. Then, the curvature of $u \mapsto \x(u, 0)$ at $u = 0$ is $$-C'^{-1} \frac{\partial^2 v}{\partial \xi_1^2}(0),$$ and also \begin{equation} \label{a*} \frac{\partial^3 \psi}{\partial \xi_1^2 \partial \xi_2}(0) = C'\l_2(\bq) \frac{\partial^2 v}{\partial \xi_1^2}(0).\end{equation}  \end{lem*}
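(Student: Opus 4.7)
The plan is to push every derivative appearing in the lemma through the dictionary between the curvature parametrization $\x(u,v)$ and the graph parametrization, written as the composition $\x(u,v) = (\xi_1(u,v),\xi_2(u,v),\psi(\xi_1(u,v),\xi_2(u,v)))$. I would evaluate all quantities at $(u,v)=(0,0)$ using the Jacobian data from (\ref{a.1})--(\ref{a.3}) (which give $\frac{\partial \xi_1}{\partial u}(0) = C$, $\frac{\partial \xi_2}{\partial v}(0) = C'$, and $\frac{\partial \xi_1}{\partial v}(0) = \frac{\partial \xi_2}{\partial u}(0) = 0$), together with the principal-frame identities at $\bq$: $\psi(0) = 0$, $\psi_1(0) = \psi_2(0) = 0$, $\psi_{11}(0) = \l_1(\bq) = 0$, $\psi_{12}(0) = 0$, and $\psi_{22}(0) = \l_2(\bq)$. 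The conversion from $\frac{\partial^2 \xi_2}{\partial u^2}(0)$ to $\frac{\partial^2 v}{\partial \xi_1^2}(0)$ is already packaged in (\ref{a.4}), so both assertions should then drop out of suitably expanded chain-rule computations.

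For the curvature assertion, I would set $\gamma(u) := \x(u,0)$ and use the standard formula $\kappa = |\gamma' \times \gamma''|/|\gamma'|^3$. One has $\gamma'(0) = C\e_1$, and the third component of $\gamma''(0) = \x_{uu}(0,0)$ expands via the chain rule as $\psi_{11}(0)\tfrac{\partial\xi_1}{\partial u}(0)^2 + 2\psi_{12}(0)\tfrac{\partial\xi_1}{\partial u}(0)\tfrac{\partial\xi_2}{\partial u}(0) + \psi_{22}(0)\tfrac{\partial\xi_2}{\partial u}(0)^2 + \psi_1(0)\tfrac{\partial^2\xi_1}{\partial u^2}(0) + \psi_2(0)\tfrac{\partial^2\xi_2}{\partial u^2}(0)$, every summand of which carries a vanishing factor. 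Hence $\gamma''(0) = \tfrac{\partial^2\xi_1}{\partial u^2}(0)\e_1 + \tfrac{\partial^2\xi_2}{\partial u^2}(0)\e_2$ and $\gamma'(0)\times\gamma''(0) = C\,\tfrac{\partial^2\xi_2}{\partial u^2}(0)\,\e_3$, so the curvature equals $\tfrac{1}{C^2}|\tfrac{\partial^2\xi_2}{\partial u^2}(0)|$. Assigning the sign dictated by the chosen orientation in the tangent plane and substituting via (\ref{a.4}) produces $-C'^{-1}\frac{\partial^2 v}{\partial \xi_1^2}(0)$.

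For (\ref{a*}), I would differentiate the Rodrigues-type identity (\ref{1.1}) once more in $u$ to obtain $\N_{uu} = -(\l_1)_u\,\x_u - \l_1\,\x_{uu}$. At $(0,0)$ the hypothesis $\l_1(\bq) = 0$ kills the second term and yields $\N_{uu}(0,0) = -C(\l_1)_u(0,0)\,\e_1$; in particular its $\e_2$-component vanishes. Independently I would compute this component directly from $\N = (-\psi_1, -\psi_2, 1)/\sqrt{1+|\nabla\psi|^2}$. The normalization denominator contributes nothing through order two because $\psi_i(0) = 0$ and $\partial_u\psi_1(0) = \psi_{11}(0)C + \psi_{12}(0)\cdot 0 = 0$; what remains is $\N^2_{uu}(0,0) = -\partial_u^2\psi_2(0,0) = -C^2\,\psi_{112}(0) - \l_2(\bq)\,\tfrac{\partial^2\xi_2}{\partial u^2}(0)$, where $\psi_{12}(0) = 0$ has again deleted several cross terms. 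Equating this to zero and invoking (\ref{a.4}) to convert $\tfrac{\partial^2\xi_2}{\partial u^2}(0)$ into $\tfrac{\partial^2 v}{\partial \xi_1^2}(0)$ yields (\ref{a*}).

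The main obstacle is purely bookkeeping: the chain-rule expansions of $\partial_u^2(\psi\circ\xi)$ and of $\partial_u^2\N$ each carry many terms a priori, and one must systematically track that the principal-frame vanishings $\psi_i(0) = 0$, $\psi_{11}(0) = 0$, and $\psi_{12}(0) = 0$ (the first two encoding $\l_1(\bq) = 0$ together with the diagonal form of the Hessian of $\psi$ at $\bq$) annihilate every contribution save the single surviving one in each coordinate of interest.
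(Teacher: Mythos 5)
Your argument is correct and, for the second assertion, takes a genuinely different route from the paper's. For the curvature claim you and the paper do essentially the same thing: both reduce to the fact that $\x_{uu}(0)$ has no component along $\N(\bq)$ (you get this from the chain-rule expansion of $\partial_u^2(\psi\circ\xi)$ using $\psi_{11}(0)=\psi_{12}(0)=0$; the paper reads it off from $\l_1=\N\cdot\x_{uu}/(\x_u\cdot\x_u)=0$ --- the same fact), after which the curvature is $C^{-2}\partial_u^2\xi_2(0)$ and the inverse-function identity \eqref{a.4} converts it. For \eqref{a*}, the paper chain-rules $\partial^3\psi/\partial\xi_1^2\partial\xi_2$ into $(u,v)$-derivatives to get \eqref{a.8} and kills the term $\partial^3\psi/\partial u^2\partial v(0)$ by differentiating the identity $\N\cdot\x_{uv}\equiv 0$; you instead differentiate the Rodrigues relation \eqref{1.1} to conclude that the $\e_2$-component of $\N_{uu}(0)$ vanishes and then compute that component from the explicit Gauss map. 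The two are close cousins ($\N\cdot\x_{uv}\equiv 0$ is equivalent to \eqref{1.1}--\eqref{1.2}), but yours avoids the third-order chain rule and produces the needed linear relation $C^2\psi_{112}(0)=-\l_2(\bq)\,\partial_u^2\xi_2(0)$ in one stroke, which is arguably cleaner. One caution on constants: your final step needs the inverse-function identity in the form $\partial^2 v/\partial\xi_1^2(0)=-C^{-2}C'^{-1}\,\partial^2\xi_2/\partial u^2(0)$, which is what the middle expression of \eqref{a.4} actually evaluates to (since $J^{-1}(0)=C^{-1}C'^{-1}$), even though the right-hand side there is printed with $C'$ in place of $C'^{-1}$; with that corrected form your relation lands exactly on $C'\l_2(\bq)\,\partial^2 v/\partial\xi_1^2(0)$, whereas substituting \eqref{a.4} literally as printed would leave you off by a factor of $C'^2$.
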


\begin{proof}
Let $\Cc$ denote the curve $u \mapsto \x(u, 0).$ First, we quote the following formulas for the principal curvatures, obtained from \cite{DoC}: \begin{equation} \label{a.5} \l_1 = \frac{\N \cdot \x_{uu}}{\x_u \cdot \x_u}, \qquad \l_2 = \frac{\N \cdot \x_{vv}}{\x_v \cdot \x_v}.\end{equation} Another formula from differential geometry (see \cite{DoC}) informs us that the curvature of $\Cc$ is given by the orthogonal projection of $\x_{uu}$ onto (\textbf{span}\{$\e_1\})^\bot \subset \R^3$ with scalar $\|\x_u \cdot \x_u\|^{-2}$.  Since $\l_1(\bq) = 0$, we know from \eqref{a.5} that $\x_{uu}$ lies in $T_\bq(\M)$. Thus, it follows that $C^{-2}\frac{\partial^2 \xi_2}{\partial u^2}(0) = C^{-2}\e_2 \cdot \x_{uu}(0)$ is the curvature at $0$ of the line of curvature $u \mapsto \x(u,0)$. The first assertion of the lemma is now taken care of by \eqref{a.4}.

Next, let us consider the derivatives of $\psi \circ (\pi \circ\x)$, which for ease of notation we shall also write as $\psi$. We would like to relate the derivatives of $\psi(\xi_1, \xi_2)$ to those of $\psi(u,v)$ as shown in \eqref{a*}. Repeated applications of the chain rule yield the following: \begin{equation} \label{a.7} \frac{\partial^2 \psi}{\partial \xi_1^2} = \frac{\partial^2 \psi}{\partial u^2}(\frac{\partial u}{\partial \xi_1})^2 + 2 \frac{\partial^2 \psi}{\partial u \partial v} \frac{\partial u}{\partial \xi_1}\frac{\partial v}{\partial \xi_1} + \frac{\partial \psi}{\partial u}\frac{\partial^2 u}{\partial \xi_1^2} + \frac{\partial \psi}{\partial v}\frac{\partial^2 v}{\partial \xi_1^2} + \frac{\partial^2 \psi}{\partial v^2}(\frac{\partial v}{\partial \xi_1})^2. \end{equation} Using the chain rule once more, we apply $\frac{\partial}{\partial \xi_2}$ to \eqref{a.7} and evaluate at $0$ to get 
\begin{equation} \label{a.8} \frac{\partial^3 \psi}{\partial \xi_1^2 \partial \xi_2}(0) = C'^{-1}(C^{-2}\frac{\partial^3 \psi}{\partial u^2 \partial v} + \frac{\partial^2 \psi}{\partial v^2} \frac{\partial^2 v}{\partial \xi_1^2})(0), \end{equation} making repeated use of the vanishing at $0$ expressed in \eqref{a.3}.

Consider the term $\frac{\partial^3 \psi}{\partial u^2 \partial v}(0)$. From pg$.\text{ }161$ of \cite{DoC}, we have the following identity for curvature parametrizations $\x$: \begin{equation} \label{a**} \N \cdot \x_{uv} \equiv 0. \end{equation} We differentiate \eqref{a**} once with respect to $u$ and obtain

\begin{equation} \label{a.9} \frac{\partial^2 \psi}{\partial u^2 \partial v}(0) = \N \cdot \x_{uuv}(0) = \N \cdot \x_{uuv}(0) + \N_u \cdot \x_{uv}(0) = \frac{\partial}{\partial u}(\N \cdot \x_{uv})(0) = 0,\end{equation} observing that $\N_u = -\l_1\x_u$ and $\l_1$ vanishes at $0$.

Therefore, \eqref{a*} is secured after we compute $\frac{\partial^2 \psi}{\partial v^2}(0)$. Observing that \begin{equation} \label{a.10} \psi(u,v) = \N(0) \cdot \x(u,v), \end{equation} we differentiate \eqref{a.10} twice. The result is $$\frac{\partial^2 \psi}{\partial v^2}(0) = (\N \cdot \x_{vv})(0) = C'^2 \l_2(\bq),$$ in light of \eqref{a.5}.

\end{proof}

\subsection{The non-convexity of $\M$} \label{a2}

The fact that \eqref{1.4} holds near the vanishing set of $K$ implies directly that $\M$ is not convex. For consider the Hessian of $\psi$ near $\bar{q}$ where $K(\bar{q}) = 0$. If we evaluate the Hessian of $\psi$ at points $(0, \xi_2)$, we get for $K = K(\xi_1, \xi_2)$ 

\begin{equation} \label{4.1} K \sim 2A_{2,1}\xi_2(2A_{0,2} + O(\xi_2)) - (O(\xi_2))^2 = 4A_{0,2}A_{2,1}\xi_2 + O(\xi_2^2) \end{equation} using the fact that $A_{2,0} = 0$ since $K(p) = 0.$ Since $A_{0,2} A_{2,1} \ne 0$, it follows that both $\M^+$ and $\M^-$ are nonempty. Convex hypersurfaces, on the other hand, are characterized by having nonnegative Gaussian curvature everywhere.

The fact that $K(q) < 0$ for some $q \in \M$ near $p$ implies that the $\delta$-neighborhoods of certain non-flat caps containing $q$ contain lines and hence have no $\ell^2$ decoupling partition. Let us verify this insight. Consider the principal expansion $\psi$, as expressed in \eqref{1.3}, based at such a point $q \in \M$. In this context, we have that $A_{2,0}$ and $A_{0,2}$ have opposite signs, and therefore the line $\mathcal{L}$ defined by $$\{(\xi_1, \sqrt{|\frac{A_{2,0}}{A_{0,2}}|}\xi_1, 0): |\xi_1| \leq \delta^{1/3}\}$$ is contained in $\Nc_\delta(U)$ for $\delta$ sufficiently small. Here, $U$ is the graph of $\psi$, and its size is taken so that the Gaussian curvature remains constant throughout $U$. 

Now, we know that the dimensions of flat caps within $\M$ having curvature $\sim 2^{-k}$ are at most $2^{k/2} \delta^{1/2} \times \delta^{1/2}$, so long as $\delta^{1/2} \leq 2^{-k}$. Therefore, letting $2^{-k}$ denote the approximate Gaussian curvature of $U$, we have, for all $\delta \leq 2^{-2k}$, that an $\ell^2$ decoupling into maximally $\delta$-flat caps would give a partition of $U$ into $2^{k/2}\delta^{1/2} \times \delta^{1/2}$ caps with constant $C_{\epsilon, \delta} = C_\epsilon \delta^{-\epsilon}$. But then, $\mathcal{L}$ would be partitioned at the expense of only $C_{\epsilon, \delta}$, in clear violation of Proposition \ref{p0.1} for $\delta > 0$ satisfying \begin{equation} \label{a1.1} 2^{k/2}\delta^{1/2} \leq \delta^{1/3 + \eta}\end{equation} for some fixed $0 < \eta $. 

The reader may observe that \eqref{a1.1} is only possible within the region $2^{-k} \geq \delta^{1/3}$. Thus, it seems to hint at the possibility that an $\ell^2$ decoupling might hold if we allow our caps to have dimensions $\delta^{1/3} \times \delta^{1/2}$, or $\delta^{1/3} \times \delta^{1/3}$ if we prefer square-like caps. More generally, the same may be conjectured for any hypersurface $\M$ with negative Gaussian curvature that does not contain lines. We seek to answer this conjecture in future work.


\begin{thebibliography}{99}
	

	\bibitem{BM} Bierstone, E. and Milman, P. D. {\em Semianalytic and subanalytic sets}, Publ. Math. I.H.E.S. \textbf{67} (1988), 5-42.
	\bibitem{BD3} Bourgain, J. and Demeter, C. {\em The proof of the $l^2$ decoupling conjecture}, Annals of Math. \textbf{182} (2015), no. 1, 351-389.

	\bibitem{BD4} Bourgain, J. and Demeter, C. {\em Decouplings for curves and hypersurfaces with nonzero Gaussian curvature}, J. Anal. Math. \textbf{133} (2017), 279-311.

	\bibitem{BDK} Bourgain, J., Demeter, C., and Kemp, D. {\em Decouplings for real analytic surfaces of revolution}, GAFA Israel seminar 2017-2019, Lecture Notes in Mathematics \textbf{2256}, 113-126.


	\bibitem{D} Demeter, C. {\em Fourier restriction, decoupling, and applications}, Cambridge Studies in Advanced Mathematics \textbf{184}. Cambridge University Press, Cambridge, 2020.

	\bibitem{DGW} Demeter, C., Guth, L., and Wang, H. {\em Small cap decouplings}, Geom. Funct. Anal. \textbf{30} (2020), 989-1062.

	

	\bibitem{DoC} do Carmo, Manfredo Perdigao. {\em Differential geometry of curves and surfaces}, Prentice-Hall Inc, 1976.
	
	\bibitem{GS} Garcia, R. and Sotomayor, J. {\em Lines of curvature on surfaces, historical comments, and recent developments}, S$\tilde{\text{a}}$o Paulo Journal of Mathematical Sciences \textbf{2} (2008), no.1, 99-143.

	\bibitem{K} Kemp, D. {\em Decouplings for surfaces of zero curvature}, arXiv preprint (2019), arXiv:1908.07002.
	

	\bibitem{KP} Krantz, S. and Parks, H. {\em A primer of real analytic functions}, Birkh\"auser Advanced Texts, Second Edition, Birkh\"auser Boston, New York 2002.
	
	\bibitem{LY} Li, J. and Yang, T. {\em Decoupling for mixed-homogeneous polynomials in $\R^3$}, arXiv preprint (2021), arXiv: 2104.00128.
	

	\bibitem{PS} Pramanik, M. and Seeger, A. {\em $L^p$ regularity of averages over curves and bounds for associated maximal operators}, Amer. J. Math. \textbf{129} (2007), no. 1, 61-103.




	

	
\end{thebibliography}
\end{document}